\DeclareSIUnit\foot{ft}%
\title[Robust information divergences for random PDE]{Robust
  information divergences for model-form uncertainty arising from
  sparse data in random PDE}
\author{Eric Joseph Hall}%
\author{Markos A. Katsoulakis}%
\address{Department of Mathematics \& Statistics\\
  University of Massachusetts Amherst}%
\email{hall@math.umass.edu}%
\email{markos@math.umass.edu}%
\thanks{The work of all authors was supported by the Office of Advanced
  Scientific Computing Research, U.S.~Department of Energy, under
  Contract No.~DE-SC0010723.}%
\DeclareMathOperator{\E}{\mathbf{E}}%
\newcommand{\Pb}{P} \newcommand{\Qb}{Q}
\DeclareMathOperator{\var}{Var}%
 \DeclareMathOperator{\cov}{Cov}%
\newcommand{\dd}{\mathrm{d}}%
\DeclareMathOperator{\trace}{tr}%
\DeclareMathOperator{\RE}{\mathcal{R}}%
\DeclareMathOperator{\argmin}{arg\,min}
\newcommand{\indic}{\textbf{1}}%
\newcommand{\rset}{\textbf{R}}%
\newtheorem{theorem}{Theorem}%
\newtheorem{corollary}{Corollary}%
\newtheorem{lemma}{Lemma}%
\theoremstyle{plain}%
\newtheoremstyle{example}%
{}%
{}%
{}
{}
{\it}
{}
{\newline}
{\thmname{#1}\thmnumber{ #2}: \thmnote{#3}.}
\newtheoremstyle{remark}%
{}%
{}%
{}
{}
{\it}
{:}
{ }
{\thmname{#1}\thmnumber{ #2}}
\theoremstyle{remark} \newtheorem{remark}{Remark}%
\crefname{equation}{}{}%
\crefname{remark}{Remark}{Remarks}%
\crefname{theorem}{Theorem}{Theorems}%
\crefname{lemma}{Lemma}{Lemmas}%
\crefname{corollary}{Corollary}{Corollaries}%
\crefname{figure}{Figure}{Figures}%
\begin{document}%
\maketitle%

\begin{abstract}
  We develop a novel application of hybrid information divergences to
  analyze uncertainty in steady-state subsurface flow problems. These
  hybrid information divergences are non-intrusive, goal-oriented
  uncertainty quantification tools that enable robust, data-informed
  predictions in support of critical decision tasks such as regulatory
  assessment and risk management. We study the propagation of
  model-form or epistemic uncertainty with numerical experiments that
  demonstrate uncertainty quantification bounds for (i) parametric
  sensitivity analysis and (ii) model misspecification due to sparse
  data. Further, we make connections between the hybrid information
  divergences and certain concentration inequalities that can be
  leveraged for efficient computing and account for any available data
  through suitable statistical quantities.
\end{abstract}

\smallskip
\noindent \textbf{Key words.} uncertainty quantification, epistemic,
sparse data, data-informed, information divergence, concentration
inequalities, sensitivity analysis, model misspecification, random
PDE, steady-state flow %

\smallskip
\noindent \textbf{AMS subject classifications.} 
  65C50, 60H35, 94A17

\section{Introduction}%
\label{sec:introduction}

\subsection{Context and objectives}%
\label{sec:context-objectives}%

Stochastic modeling of complex systems involves a multilayered process
where various sources of uncertainty are accounted for at different
stages. In a goal-oriented framework, the ultimate focus of the model
is to estimate a quantity of interest (QoI) that depends on model
outputs in order to make predictions in support of risk management,
regulatory assessment, performance optimization, safety or reliability
engineering, and other critical decision tasks
(\cite{OberkampfEtAl:2004cp, DupuisChowdhary:2013ae}). Since model
outputs can be sensitive to distributional assumptions made throughout
this process, the analysis of uncertainty in a QoI is essential.
Additionally, applications in physically relevant settings often pose
challenges not present in the conceptual formulation that demand
incorporating data into the modeling process. The modeling of complex
systems in physically relevant settings, therefore, requires
goal-oriented uncertainty quantification (UQ) tools that allow one to
distinguish sources of uncertainty present in the system with a view
toward making robust, data-informed predictions.

\begin{figure}[tb]
  \centering
  \includegraphics[width=0.8\textwidth]{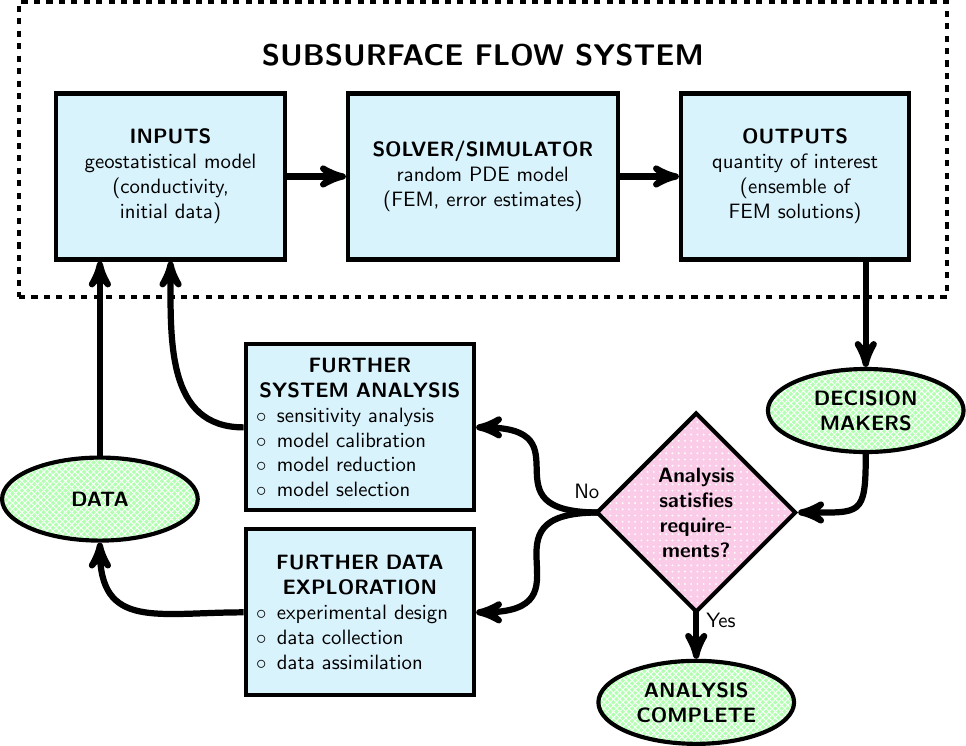}
  \caption{The modeling of steady-state subsurface flow in physically
    relevant settings demands goal-oriented UQ tools that allow one to
    distinguish different sources of uncertainty present in the system
    with a view toward making robust, data-informed predictions in
    support of decision tasks (adapted from
    \cite{OberkampfEtAl:2004cp}). A key challenge concerning the
    propagation of epistemic uncertainty from inputs to outputs is
    examined in detail in \cref{fig:propagation-uncertainty}.}
  \label{fig:layers-subsurface-flow-model}
\end{figure}

In the present work, we consider the modeling of steady-state
subsurface flow, a complex system depicted in
\cref{fig:layers-subsurface-flow-model} that has important
applications in hydrology, carbon sequestration, and petroleum
engineering (\cite{AndersonEtAl:2015gw, Ewing:1983rs,
  AarnesEtAl:2009ms, Dagan1989:ft}). The core mathematical problem
involves a random partial differential equation (PDE) of elliptic type
that describes the physics of steady-state flow. The stochastic
coefficient in the PDE represents a conductivity field given by a
geostatistical model with properties inferred from relevant data.
Robust predictions for a QoI, and in particular techniques for
quantifying the propagation uncertainty from the geostatistical model,
are critical to inform decision tasks. However, in physically relevant
settings the complexity of subsurface porosity and the sparsity of
available data pose a number of challenges.

We seek to address some of these challenges using techniques from
applied probability. Information divergences, the primary tool that we
will employ, have 
been successfully applied to problems in stochastic dynamics
(\cite{DupuisChowdhary:2013ae, DupuisEtAl:2015ps,
  HarmandarisEtAl:2016vi, GourgouliasEtAl:2016sp}). Based on the
Donsker--Varadhan variational principle (\cite{DupuisEllis:1997ld}),
information divergences provide goal-oriented UQ bounds that first
appeared in \cite{DupuisChowdhary:2013ae} and have since undergone
various extensions and further analysis (\cite{LiXiu:2012fp,
  AtarChowdharyDupuis:2015rd, KatsoulakisEtAl:2017sc}). However, the
application of information divergences to complex real-world systems,
for example, systems containing multiple scales, multiple levels of
fidelity, and multiple sources and types of uncertainty, have only
been shallowly explored. The multifaceted nature of the modeling task
for our problem of interest requires a formulation that distinguishes
multiples sources of uncertainty to which we attach varying levels of
confidence while also considering computational challenges to
practical implementation. Further, we wish to incorporate data into
this process in a systematic manner that complements existing
inference procedures and we provide new perspectives on the hybrid
information divergences that enable data-informed predictions.

For our system of interest, we develop a novel application of hybrid
information divergences to study the propagation of model-form
uncertainty related to the inputs of the goal-oriented framework in
\cref{fig:layers-subsurface-flow-model}. Model-form or epistemic
uncertainty (\cite{Helton:1994uq, HoffmanHammonds:1994pu, Rowe:1994uu,
  FersonGinzburg:1996uq, Hora:1996ae, Parry:1996uq,
  PateCornell:1996uq}) in this context expresses ignorance in the
nature of the geostatistical model due to the sparsity of available
data arising for example from lacking priors, incomplete information,
and missing science. This uncertainty represents a modeling error that
we evaluate as the weak error between a QoI obtained from a nominal
and an alternative geostatistical model for the inputs. These hybrid
information divergences build on traditional UQ by enabling bounds
across model predictions that would otherwise be computationally
expensive using standard techniques, e.g.\ Monte Carlo (MC) or
generalized polynomial chaos (gPC) expansions.

Importantly, the hybrid nature of the information divergences employed
here allows us to represent, aggregate, and distinguish various
sources of uncertainty by treating distinct sources under performance
measures that allow us to attach varying levels of confidence to
different parts of the model. These hybrid divergences have the form
of a relative entropy penalized by a risk-sensitive hybrid performance
measure, similar in flavor to the Gibbs variational formula in
statistical mechanics (\cite{Ellis:2006ed}), that strikes a balance
between data-dependent quantities (relative entropy) and 
physics-dependent quantities (risk-sensitive performance measures
i.e.\ cumulant generating functions that depend on the output of the
forward model). On the one hand, these bounds are robust in that for a
fixed nominal model they bound all alternative models within a given
information budget. On the other hand, these bounds are tight in that
there exists an alternative model within a given information budget
for which the bounds are attained as equality
(\cite{AtarChowdharyDupuis:2015rd,GourgouliasEtAl:2017aa}). We mention
that this information budget, when viewed as an allowable level of
model-form uncertainty, is philosophically similar to the privacy
level introduced in the context of R{\'e}nyi divergence in a model for
differential privacy (\cite{DuchiJordanWainwright:2013lp}). Numerical
experiments are included here to demonstrate the application of this
theory for (i) a straightforward UQ task in
\cref{sec:screening-and-sa} concerning parametric sensitivity analysis
and (ii) a more exploratory UQ task in \cref{sec:data-informed-bounds}
featuring bounds for model misspecification due to sparse data that
are not captured by small parametric perturbations.

Although a mathematically rigorous hybrid modeling framework was first
introduced in \cite{DupuisChowdhary:2013ae}, with variations in
\cite{LiQiXiu:2014uq}, here we explore the application of hybrid
goal-oriented divergences to stochastic models of complex systems
involving random PDE and also provide new insights on uncertainty
arising from data while providing a new perspective on the information
divergences as data-informed UQ bounds. In particular, we apply hybrid
information divergences to complex systems where the forward modeling
involves random PDEs where the random field is inferred from
real-world data and gPC cannot be used due to log-normality
(\cite{ErnstEtAl:2012pc}). We explore how the hybrid information
divergences quantify model-form uncertainty while also providing a
perspective linking model-form uncertainty to sparse data. This brings
new insights on the use of information divergence for predictive
modeling with data by studying how changes to data result in
alternative models that are not small perturbations of the nominal
model. Further, we explore connections with certain well-known
concentration inequalities (\cite{DemboZeitouni:2010ld}) that can be
leveraged for efficient computing; this latter approach was recently
introduced and applied to model problems in
\cite{GourgouliasEtAl:2017aa} and we extend these ideas here to models
for complex systems involving random PDE where the stochastic fields
are infinite dimensional and inferred from real-world data. The
concentration inequalities yield non-intrusive upper and lower
uncertainty quantification bounds that involve statistical quantities
more easily computed than the cumulant generating functions in earlier
works on information divergences.

The hybrid divergences presented here provide a UQ framework that is
well suited to the particular application of interest for a number of
reasons. Firstly, hybrid information divergences provide a natural way
of representing and distinguishing various sources of uncertainty
arising in the distinct layers of the modeling process for complex
systems that complement existing inference procedures while allowing
us to attach varying levels of confidence in different components of
our model. Secondly, the structure of the hybrid divergences allows UQ
computations to be carried out non-intrusively by interfacing with
existing simulation and sampling methods for the random PDE such as
the popular MC finite element method (FEM). Thirdly, the hybrid
information divergences encapsulate ``worst case'' scenarios within a
rigorous formulation similar to a robust optimization perspective
(\cite{Ben-TalGhaouiNemirovski:2009ro}) making them appropriate
deliverables in the context of decision support. In the remainder of
this section, we formulate the model problem and some of the main UQ
challenges that motivate our approach.

\subsection{Formulation of the model problem}%
\label{sec:model-problem}%

Presently we detail the main layers comprising the subsurface flow
system in \cref{fig:layers-subsurface-flow-model} each in turn.

\subsubsection{Random PDE model}
\label{sec:random-pde-model}
For a given probability space $(\Omega, \mathcal{F}, \Pb)$, we
consider the random PDE in the unknown $u$,
\begin{equation}
  \label{eq:model-rpde}
  - \nabla \cdot ( a(\omega, x) \nabla u(\omega, x)) = f(x), \quad
  \text{for } x \in \Gamma \subset \rset^d,
\end{equation}
subject to $u = u_0$ on $\partial \Gamma$, with given data $u_0$,
source term $f: \Gamma \to \rset$, and stochastic conductivity
$a:\Omega \times \Gamma \to \rset$. Arising from Darcy's law with
continuity, \cref{eq:model-rpde} is a model for steady-state flow or
diffusion. In subsurface hydrology, problem~\cref{eq:model-rpde}
models time-independent groundwater flow where $u$ might represent a
water head (pressure) or the concentration of a containment
(\cite{AndersonEtAl:2015gw, Dagan1989:ft}). The existence of a unique
pathwise variational solution $u$ to \cref{eq:model-rpde} follows by
assuming sufficiently regular data and boundedness of the conductivity
(\cite{Charrier:2012}). For simplicity of presentation we consider the
steady-state system \cref{eq:model-rpde}, however, the theory that
follows can be applied to other forward models for subsurface flow
and/or transport such as (reaction-)advection-dispersion equations
\cite{Tartakovsky2013:rv,AndersonEtAl:2015gw}.

In the sequel we consider the standard, continuous piecewise-linear
FEM approximation $\bar{u}(\omega) \approx u(\omega)$ of the pathwise
variational solution of \cref{eq:model-rpde} and then sample QoIs via
a MC method. This approach is favored for the application of interest
since the finite element solution $z = (\bar{u}_n)$ is a possibly high
dimensional random vector due to the low-regularity of the
conductivity field. A discrete projection of the conductivity field
$y = (\bar{a}_n) \approx a(\omega)$ is then used when forming the
stiffness matrix in the FEM solver. Other approaches, such as the
stochastic Galerkin (\cite{MatthiesKeese:2005}) or stochastic
collocation (\cite{BabuskaNobileTempone:2007}) methods are typically
advantageous when the conductivity possesses more regularity; being
non-intrusive, our approach is also applicable using these methods.
Regarding the analysis of FEM errors, \emph{a priori} estimates are
available in \cite{CharrierScheichlTeckentrup:2013} and computable
goal-oriented estimates for problems with rough stochastic
conductivities are in \cite{Halletal:2016sc}.

\subsubsection{QoIs}
\label{sec:qois}
The goal of problem \cref{eq:model-rpde} is to estimate a QoI,
\begin{equation*}
  \E_\Pb [g(u)] = \int_\Omega g(u(\omega,x)) \dd\Pb,
\end{equation*}
for a given goal functional $g$ in support of a decision task. The
numerical experiments in this work focus on goal functionals that
yield statistics of point estimates, $g(u) = u(x)$, and indicator goal
functionals, $g(u)= \indic_A$ that correspond to failure
probabilities,
\begin{equation}
  \label{eq:failure-probability}
  \E_\Pb[\indic_{A}] = \Pb(A)\,,
\end{equation} 
for events $A \subset \Omega$ such as
$A :=\{\omega : u(\omega, x_0) > k \}$, the event that the solution at
$x_0 \in \Gamma$ exceeds a threshold $k$. In this instance, QoIs might
involve solving the steady-state system \cref{eq:model-rpde} for
contaminant concentrations and hydraulic heads or fluxes (e.g. heads
$u(x)$, fluxes $q(x) := - a(x) \nabla u(x)$) to predict QoIs or
forecast future conditions (\cite{AndersonEtAl:2015gw}). QoIs such as
failure probabilities, for example, the probability of a contaminant
exceeding a threshold or a contaminant plume reaching a specified
zone, would then factor into larger probabilistic risk assessment,
i.e.\ concerning contamination of groundwater, that would represent
perhaps one system fault or route to failure
(\cite{Tartakovsky2013:rv}). A QoI that is meant to provide a
quantitative description of the system can be highly sensitive to
distributional assumptions on the underlying geostatistical model.
This poses a key challenge for the support of decision tasks and
highlights the importance of understanding the propagation of modeling
error due to the geostatistical model in order to quantify uncertainty
in a QoI.

\subsubsection{Geostatistical model}%
\label{sec:geostatistical-model}%
Subsurface materials are observed to be heterogeneous over each of the
problem scales related to experimental measurements
(\cite{Dagan:1986}). In applications to physically relevant settings,
fully resolving a model for $a$ requires more data than is possible to
acquire. Uncertainty in the problem data is subsequently captured
through a geostatistical model for $a$ that typically takes the form
of a log-normal random field that possesses low regularity. Such a
field is characterized by its mean, $\mu(x) = \E[\log a(x)]$, and
covariance,
\begin{equation*}
  \label{eq:spatial-correlations}
  C(x,\tilde{x}) = \cov[\log a(x), \log a(\tilde{x})], 
  \qquad \text{for } x, \tilde{x} \in \Gamma.
\end{equation*}
These quantities describe the spatial structure in terms of statistics
between different locations and are nontrivial to model for
heterogeneous media. In applications, these quantities are typically
further assumed to have a parametric form where the hyperparameters
that describe $\mu$ and $C$ are representative of physical properties
that can theoretically be measured using relevant data. Inference
procedures for the hyperparameters range from classical geostatistical
methods, that rely on fitting variograms using likelihoods or moments,
to more advanced Bayesian methodologies (\cite{GelfandEtAl:2010hb}).
For applications in physically relevant settings, the complex physics
of subsurface porosity results in sparse data that diminishes our
confidence in the form of the geostatistical model. In the next
section, we further motivate and formulate these challenges.

\subsection{UQ challenges}%
\label{sec:uq-challenges}%

The rough log-normal random fields that are typically used to capture
the heterogeneity of subsurface materials produce computational
challenges at the level of the solver and when analyzing model outputs
(\cref{fig:layers-subsurface-flow-model}). For example, gPC methods
exhibit slow convergence due to the log-normality
(\cite{ErnstEtAl:2012pc}). As noted in
\cite{CliffeGilesScheichlTeckentrup:2011}, the correlation lengths
involved in the geostatistical model are typically short with respect
to the problem domain, hence stochastic Galerkin methods yield high
dimensional QoIs, but still too large to guarantee a separation of
scales, an obstacle to stochastic homogenization techniques. In
addition, the complex physics also affects the acquisition and
reliability of data.

The data used to identify the salient features of the geostatistical
model are typically sparse due to costs arising from a number of
confounding factors. In this context, data may come from either
empirical or experimental measurements collected by domain scientists
at different scales of the problem. Then, relevant data, for instance,
the permeability field, needs to be assimilated and inferred from
these measurements. For example, a geostatistical model for the
conductivity might be based on empirical porosity and water retention
measurements from a controlled laboratory experiment in combination
with variogram fitting methods (\cite{Durner:1994hc}) or on hydraulic
head measurements from \emph{in situ} field tests in combination with
a Bayesian inverse problem (\cite{McLaughlinTownley:1996ip}). In both
cases, the geostatistical model is determined from data whose
availability is limited and whose reliability should be questioned.
The combination of these factors influences our confidence in the form
of the geostatistical model.

Therefore, we view the geostatistical model as a source of model-form
or epistemic uncertainty. Although theoretically reducible,
eliminating epistemic uncertainty entirely for subsurface flow is not
feasible due to the prohibitive cost of collecting sufficient data.
Additionally, the physical system has other sources of randomness such
as aleatoric uncertainty, or variability, in the model inputs, solver,
and outputs and may have independent sources of epistemic uncertainty
that arise in each of these layers. All of this randomness propagates
to the QoI and influences the decision task. A less refined approach,
in contrast to the one considered here, is to assume that epistemic
uncertainty can be modeled by aleatoric uncertainty which is typically
the case in a standard MC approximation when one assumes that a
distribution for each uncertain aspect of the system exists
(\cite{OberkampfEtAl:2004cp, DupuisChowdhary:2013ae}).

The main UQ challenges postulated above are summarized as follows:
\begin{compactitem}[\qquad $\circ$]
\item represent and distinguish various sources of uncertainty in the
  system;
\item propagate model-form uncertainty;
\item inform decision tasks through robust data-informed deliverables;
\item quantify the impact of sparse data on predictions in a
  goal-oriented framework; and
\item analyze heterogeneity of subsurface physics.
\end{compactitem}
The final challenge point, related to the solver layer of
\cref{fig:layers-subsurface-flow-model}, leads to considerations that
impact the random PDE model, such as UQ for multi-phase flows, and are
beyond the scope of the present work. The focus of the present work is
instead toward addressing the first four challenge points, that have
important implications for the inputs, outputs, further analysis, and
data layers of the model in \cref{fig:layers-subsurface-flow-model}.
In particular, we address the propagation of model-form uncertainty by
employing hybrid representations of information divergences,
introduced in the next section, that allow us to represent and
distinguish various sources of randomness.

\section{Hybrid information divergences and model-form uncertainty}%
\label{sec:hybr-inform-diverg}%

\subsection{Propagation of model-form uncertainty}
\label{sec:prop-model-form}

A key challenge addressed in this work concerns the propagation of
model-form uncertainty within the goal-oriented framework in
\cref{fig:layers-subsurface-flow-model}. We view the propagation of
model-form uncertainty as a modeling error or model bias,
\begin{equation}
  \label{eq:weak-error-observables}
  \mathcal{E}(\Qb, \Pb; g(\bar{u})) 
  := \E_\Qb[g(\bar{u})] - \E_\Pb[g(\bar{u})],
\end{equation}
the weak error between a QoI evaluated under a nominal measure $\Pb$
and an alternative measure $\Qb$. We do not give a method for choosing
the nominal model $\Pb$ or the alternative model $\Qb$, but only a
method for reasoning quantitatively about the effect that different
modeling choices will have on decision support. In this context we
view the nominal model $\Pb$ as a computationally tractable model or
surrogate presently being used to inform decision tasks; in subsurface
flow, the nominal model would be derived from the conceptual model
chosen by a geoscientist that summarizes what is known about the
hydrogeological system
(\cite{AndersonEtAl:2015gw,Tartakovsky2013:rv}). Respectively, the
alternative model $\Qb$ can either be viewed as a fine scale model
that is not computationally tractable (for instance $\Pb$ as a
coarse-grained model for $\Qb$) or, in view of sparse data, as another
surrogate that is also plausible ($\Qb$ equally as plausible as the
nominal model $P$). In either case, the weak error
\cref{eq:weak-error-observables} suggests how different modeling
choices impact model predictions of the QoI. We seek representations
for $\Pb$ and $\Qb$ that will allow us to track the propagation of
uncertainty from model inputs to outputs as illustrated in
\cref{fig:propagation-uncertainty}. We observe that although the
propagation of model-form uncertainty is controlled by
\cref{eq:weak-error-observables}, quantifying the propagation directly
using \cref{eq:weak-error-observables} will be computationally
infeasible (cf.\ \cref{cor:info-budget} below).

\begin{figure}
  \centering
  \includegraphics[width=0.8\textwidth]{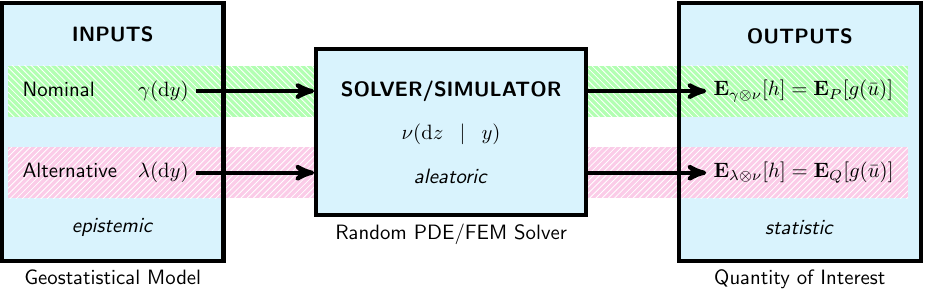}
  \caption{Detail of the dashed box in
    \cref{fig:layers-subsurface-flow-model} that demonstrates
    model-form uncertainty in the geostatistical model propagating
    through the random PDE solver, with distribution
    \cref{eq:solution-distribution}, to a QoI. The propagation is
    controlled by the weak error \cref{eq:weak-error-observables}
    between a QoI evaluated under a nominal and alternative model with
    distributions \cref{eq:nominal-alternative-distributions}.}
  \label{fig:propagation-uncertainty}
\end{figure}

As the conductivity field $a$ is a source of epistemic uncertainty,
the distribution of $y = (\bar{a}_n)$ is not known and it will be of
natural interest to study the propagation of this source of
uncertainty in particular and to compare the effects of different
solver inputs as suggested in \cref{fig:propagation-uncertainty}. To
this end, we consider
\begin{equation}
  \label{eq:nominal-alternative-distributions}
  \text{nominal } y \sim \gamma \quad\text{or}\quad 
  \text{alternative } y \sim \lambda,
\end{equation}
for measures $\gamma$ and $\lambda$ on a Polish space $\mathcal{Y}$,
arising from the geostatistical models proposed for $a$. For example,
the nominal and alternative distribution of $y$ could be associated
with small parametric perturbations or to completely different models
within a parametric family. The discrete solution $z = (\bar{u}_n)$,
resulting from a given algorithm for solving the random PDE,
represents a source of variability as it is modeled by a stochastic
process. Theoretically, $z$ has a known distribution given by $\nu$,
on a Polish space $\mathcal{Z}$, that depends on the solver and
$\bar{a}$. In general, even if the distribution of $a$ is prescribed
it may not be tractable to sample from $\nu$ directly. Instead, the
conditional distribution of $\bar{u}$ given $y$, that is,
\begin{equation}
  \label{eq:solution-distribution}
  z = (\bar{u}_n) \sim \nu(\dd z \mid y),
\end{equation}
is a computable quantity that we will evaluate using a FEM solver as
indicated in \cref{fig:propagation-uncertainty}. Theoretically,
qualitatively different distributions for $y$ (i.e.\ geostatistical
models with more or less regularity) could necessitate different
solvers; in this case, one could also include $\nu'$ different from
$\nu$. For simplicity here we assume that the same solver with
distribution $\nu(\dd z \mid y)$ is used with respect to the nominal
and alternative model. Stating our objective more formally, we
consider \cref{eq:weak-error-observables} between a QoI sampled with
respect to the full Bayesian network for the nominal measure
$\Pb = \gamma \otimes \nu$ and alternative measure
$\Qb = \lambda \otimes \nu$. The preceding analysis suggests that
bounds for model outputs that distinguish these different sources of
uncertainty can be obtained by concentrating on a special case of the
hybrid bounds proposed in \cite{DupuisChowdhary:2013ae} that rely on
the conditional distribution \cref{eq:solution-distribution}.

\subsection{Representing and distinguishing sources of uncertainty}%
\label{sec:hybr-perf-meas}%

To achieve the desired bounds, we represent $g(\bar{u})$ in terms of a
variational form
\begin{equation*}
  g(\bar{u}) = h(y,z)
\end{equation*}
that distinguishes between the epistemic variable $y$, related to
$\bar{a}$, and aleatoric variable $z$, related to $\bar{u}$. That is,
for $\Qb = \lambda \otimes \nu$ we define a hybrid performance measure
$h$ by
\begin{equation}
  \label{eq:performance-meas}
  \E_{\lambda\otimes\nu} [h] 
  = \int_{\mathcal{Y}}\int_{\mathcal{Z}} h(y,z) 
  \nu(\dd z \mid y) \lambda(\dd y) 
  = \int_\Omega g(\bar{u}) \dd \Qb 
  = \E_\Qb [g(\bar{u})].
\end{equation}
For simplicity in the presentation we assume that $z = (\bar{u}_n)$ is
only a source of variability and that the $u_0$ and $f$ in
\cref{eq:model-rpde} are deterministic; however, observe that the
representation \cref{eq:performance-meas} can easily be extended to
characterize various uncertain aspects of the system by introducing
multiple integrals that aggregate and distinguish each independent
source of randomness.

A key observation is that the hybrid performance measure
\cref{eq:performance-meas} can then be expressed as
\begin{equation}
  \label{eq:equality-h-Hg}
  \E_{\lambda\otimes\nu} [h] = \E_\lambda [H^g]\,,
\end{equation}
where the random variable $H^g(y)$ is the marginal performance measure
given by,
\begin{equation}
  \label{eq:hybrid-pm}
  H^g(y) = \int_\mathcal{Z} h(y,z) \nu(\dd z \mid y) \,,
\end{equation}
for $y \in \mathcal{Y}$. $H^g$ encodes the propagation of the
model-form uncertainty to the ensemble solution of the random PDE (the
outputs in \cref{fig:propagation-uncertainty}). Approximation of a QoI
depending on the goal functional $g$ amounts to a standard MC estimate
for the sample mean of $H^g$ where $H^g$ is computed in a
non-intrusive manner using any available algorithm for the solver.
Next, we introduce an important information theoretic tool that will
allow us to measure differences between models suggested for epistemic
variables.

\subsection{Relative entropy in UQ}
\label{sec:relative-entropy-uq}

The relative entropy, or Kullback--Leibler divergence, quantifies the
discrepancy between two distributions (see for example \cite[\S
A.5]{RasmussenWilliams:2006gp}). Given probability measures $\gamma$
and $\lambda$, such that $\lambda \ll \gamma$, the relative entropy of
$\lambda$ with respect to $\gamma$ is
\begin{equation*}
  \label{eq:re-def}
  \RE(\lambda \mid \gamma) = 
  \int \log\frac{\dd \lambda (y)}{\dd \gamma (y)} \lambda(\dd y), 
\end{equation*}
and we note that $\RE(\lambda \mid \gamma) \geq 0$ with equality if
and only if $\lambda = \gamma$ almost everywhere (the Gibbs
inequality). For distributions that belong to a general exponential
family, closed formulas exist for the relative entropy
(\cite{GilEtAl:2013rd,LieseVajda:2007sd}). For example, the relative
entropy for $d$-dimensional multivariate Gaussian distributions
$\lambda = \mathcal{N}(\mu_i, \Sigma_i)$ and
$\gamma = \mathcal{N}(\mu_j, \Sigma_j)$ is given by
\begin{equation}
  \label{eq:re-multivar-norm}
  \RE(\lambda \mid \gamma) 
  = \frac{1}{2}\left(\log |\Sigma_j| - \log |\Sigma_i|
    + \trace\left(\Sigma_j^{-1}\Sigma_i\right) 
    +  (\mu_i - \mu_j)^\top \Sigma_j^{-1} (\mu_i - \mu_j) - d\right),
\end{equation}
where $|\cdot|$ denotes the
determinant. 

The relative entropy is related to the observable $H^g$ via the
Legendre transform of the cumulant generating functional
$\log \E_\gamma [e^{H^g}]$. We will use this well known fact from
Large Deviations Theory to study the propagation of model-form
uncertainty in \cref{fig:propagation-uncertainty}. This representation
balances data- and physics-based components and in particular we use
this new perspective in \cref{sec:data-informed-bounds} to provide
data-informed UQ bounds. The proof of the following lemma is well
known and available in \cite{DupuisEllis:1997ld}, nevertheless we
provide it for the convenience of the reader.

\begin{lemma}
  \label{lem:bound-variational-form}
  Let the marginal performance measure $H^g$ \cref{eq:hybrid-pm} be
  measurable and bounded and let $\gamma$ be a probability measure on
  $(\Omega, \mathcal{F})$. Then
  \begin{equation*}
    \log \E_{\gamma} [ e^{H^g}] 
    = \sup_{\lambda \ll \gamma} \left\{ 
      \E_\lambda [H^g] - \RE(\lambda \mid \gamma) \right\}.
  \end{equation*}
\end{lemma}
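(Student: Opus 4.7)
The plan is to establish the two inequalities $\geq$ and $\leq$ separately, with the second being attained by an explicit optimizing measure, namely the Gibbs tilt of $\gamma$ by $H^g$. Since $H^g$ is bounded and measurable, the normalizing constant $Z := \E_\gamma[e^{H^g}]$ is finite and strictly positive, so I may define the probability measure $\lambda^*$ by
\begin{equation*}
  \frac{\dd \lambda^*}{\dd \gamma}(y) = \frac{e^{H^g(y)}}{Z}.
\end{equation*}
Note that $\lambda^* \ll \gamma$ and, by construction, $\log(\dd\lambda^*/\dd\gamma) = H^g - \log Z$.

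For the upper bound, I would fix an arbitrary $\lambda \ll \gamma$ and carry out the standard change-of-measure manipulation. Since $\lambda^* \sim \gamma$ on the support of $e^{H^g}$, we have $\lambda \ll \lambda^*$, and the chain rule for Radon--Nikodym derivatives gives
\begin{equation*}
  \log \frac{\dd \lambda}{\dd \gamma}
  = \log \frac{\dd \lambda}{\dd \lambda^*} + \log \frac{\dd \lambda^*}{\dd \gamma}
  = \log \frac{\dd \lambda}{\dd \lambda^*} + H^g - \log Z
\end{equation*}
$\lambda$-almost everywhere. Integrating against $\lambda$ yields the decomposition
\begin{equation*}
  \RE(\lambda \mid \gamma) = \RE(\lambda \mid \lambda^*) + \E_\lambda[H^g] - \log Z,
\end{equation*}
and rearranging gives $\E_\lambda[H^g] - \RE(\lambda \mid \gamma) = \log Z - \RE(\lambda \mid \lambda^*)$. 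Since $\RE(\lambda \mid \lambda^*) \geq 0$ by the Gibbs inequality, the left-hand side is bounded above by $\log Z = \log \E_\gamma[e^{H^g}]$, and taking the supremum over all $\lambda \ll \gamma$ gives the $\geq$ direction.

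For the matching lower bound, I would simply take $\lambda = \lambda^*$ in the identity above: then $\RE(\lambda^* \mid \lambda^*) = 0$ and we recover $\E_{\lambda^*}[H^g] - \RE(\lambda^* \mid \gamma) = \log Z$, so the supremum is attained and equals $\log \E_\gamma[e^{H^g}]$. The only real subtlety is the absolute-continuity bookkeeping, specifically checking that $\lambda \ll \lambda^*$ whenever $\lambda \ll \gamma$ (which holds because $e^{H^g}/Z$ is strictly positive thanks to boundedness of $H^g$) and that all the integrals involved are well defined; boundedness of $H^g$ makes these checks routine and guarantees there are no $\infty - \infty$ issues in the decomposition identity. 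No genuine analytical obstacle arises beyond this bookkeeping.
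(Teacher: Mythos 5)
Your proof is correct: the Gibbs-tilting decomposition $\RE(\lambda \mid \gamma) = \RE(\lambda \mid \lambda^*) + \E_\lambda[H^g] - \log Z$ together with the Gibbs inequality, and attainment at $\lambda = \lambda^*$, is precisely the classical Donsker--Varadhan argument that the paper invokes by citing Dupuis and Ellis rather than proving the lemma itself. The absolute-continuity and integrability bookkeeping you flag is handled correctly by the boundedness of $H^g$ (in particular the case $\RE(\lambda\mid\gamma)=\infty$ is trivial since $\E_\lambda[H^g]$ is then still finite).
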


\begin{proof}
  Define $\nu$ as
  $\dd \nu / \dd \gamma = e^{H^g} / \E_\gamma [e^{H^g}]$. Then
  \begin{align*}
    - \RE(\lambda \mid \gamma) + \E_\lambda [H^g] 
    &= 
      - \E_\lambda \!\!\left[\log \frac{\dd\lambda}{\dd\gamma}
      \right] 
      + \E_\lambda [H^g]\\
    &= -\E_\lambda \!\!\left[\log \frac{\dd\lambda}{\dd\nu}
      \right] 
      - \E_\lambda \!\!\left[\log\frac{\dd\nu}{\dd\gamma}
      \right] 
      + \E_\lambda [H^g]\\
    &= -\RE(\lambda \mid \nu) + \log \E_\gamma[e^{H^g}].
  \end{align*}
  This proves the lemma and $\nu$ is necessarily the suprimizing
  measure.
\end{proof}

\Cref{lem:bound-variational-form} is the key ingredient in the proof
of \cref{thm:information-divergence}, concerning hybrid information
divergences, that follows in the next section.

\subsection{Goal-oriented hybrid divergences for modeling error}
\label{sec:goal-oriented-hybrid-div}

We begin by defining a hybrid risk-sensitive performance measure,
\begin{equation}
  \label{eq:risk-sensitive-pm}
  \Lambda_\gamma (c, H^g) := \frac{1}{c} \log \int_{\mathcal{Y}}
  e^{c\left(\int_{\mathcal{Z}} h(y,z) \nu(\dd z \mid y) - \E_\gamma
      [H^g] \right)} \gamma(\dd y) 
  = \frac{1}{c} \log \E_\gamma [e^{c(H^g - \E_\gamma[H^g])}],
\end{equation}
for a parameter $c >0$ that depends on the goal function $g$ through
the marginal performance measure $H^g$. As suggested by the last
equality in \cref{eq:risk-sensitive-pm}, $\Lambda_\gamma$ is a
weighted cumulant generating functional (log moment generating
functional) for the centered random variable
$c(H^g - \E_\gamma[H^g])$. The idea behind
\cref{thm:information-divergence} below is to apply
\cref{lem:bound-variational-form} to \cref{eq:risk-sensitive-pm} to
obtain a family of bounds indexed by $c$. The best bound is then found
by minimizing over $c$,
\begin{equation}
  \label{eq:xi}
  \Xi (\lambda \mid \gamma; H^g) :=
  \inf_{c>0} \left\{ \Lambda_\gamma(c,H^g) 
    + \frac{1}{c} \RE(\lambda \mid \gamma)\right\}\,,
\end{equation}
which defines a goal-oriented hybrid information divergence. This
insight was first made in \cite{DupuisEtAl:2015ps} where information
divergences were applied to stochastic dynamical systems with single
sources of uncertainty; we contrast the naive implementation of the
information divergences from \cite{DupuisEtAl:2015ps} with the hybrid
information divergences developed here for our problem of interest
after the proof of \cref{thm:information-divergence}. That
\cref{eq:xi} is a divergence, i.e.\
$\Xi(\lambda \mid \gamma ; H^g) \geq 0$ and
$\Xi(\lambda \mid \gamma ; H^g) = 0$ if and only if $\lambda = \gamma$
almost everywhere or $H^g$ is constant $\gamma$-a.s., follows
analogously to the proof of Theorem~2.7 in \cite{DupuisEtAl:2015ps}. A
bound for the weak error \cref{eq:weak-error-observables} specialized
to \cref{eq:xi} is formulated as follows.

\begin{theorem}[Hybrid Information Divergence]
  \label{thm:information-divergence}
  For a probability measure $\gamma$, measurable marginal performance
  measure $H^g$, and finite $\Lambda_\gamma(c, H^g)$ in a neighborhood
  about $c=0$, the bound
  \begin{equation}
    \label{eq:xi-bound}
    -\Xi (\lambda \mid \gamma; -H^g) \leq 
    \E_{\Qb}[g(\bar{u})] 
    - \E_{\Pb}[g(\bar{u})] 
    \leq \Xi (\lambda \mid \gamma; H^g)
  \end{equation}
  holds for any probability measure $\lambda$ such that
  $\RE(\lambda \mid \gamma) < \infty$ where
  $\Qb = \lambda \otimes \nu$ and $\Pb = \gamma \otimes \nu$. Further,
  the goal-oriented divergence can be linearized with respect to the
  relative entropy,
  \begin{equation}
    \label{eq:xi-linearization}
    \Xi (\lambda\mid \gamma; \pm H^g) = 
    \sqrt{\var_\gamma\left[H^g\right]}\sqrt{2\RE(\lambda\mid\gamma)} 
    + O(\RE(\lambda\mid\gamma))\,.
  \end{equation}
\end{theorem}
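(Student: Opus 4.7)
The plan is to reduce the bound to a direct application of the Donsker--Varadhan variational formula (Lemma~\ref{lem:bound-variational-form}) applied to the centered observable $c(H^g - \E_\gamma[H^g])$, and then to obtain the linearization by expanding the cumulant generating functional to second order in $c$ and optimizing the resulting convex upper bound.

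First, I would record that by the tower property encoded in \cref{eq:equality-h-Hg}, the weak error at the observable level equals the weak error at the marginal level:
\begin{equation*}
  \E_\Qb[g(\bar u)] - \E_\Pb[g(\bar u)] = \E_\lambda[H^g] - \E_\gamma[H^g].
\end{equation*}
For the upper bound, apply \cref{lem:bound-variational-form} not to $H^g$ but to $c(H^g - \E_\gamma[H^g])$, for any fixed $c>0$. The variational identity gives
\begin{equation*}
  \log \E_\gamma\bigl[e^{c(H^g - \E_\gamma[H^g])}\bigr] \;\geq\; c\bigl(\E_\lambda[H^g] - \E_\gamma[H^g]\bigr) - \RE(\lambda\mid\gamma).
\end{equation*}
Rearranging, dividing by $c>0$, and recognizing the definition \cref{eq:risk-sensitive-pm} of $\Lambda_\gamma(c,H^g)$ yields
\begin{equation*}
  \E_\lambda[H^g] - \E_\gamma[H^g] \;\leq\; \Lambda_\gamma(c,H^g) + \frac{1}{c}\RE(\lambda\mid\gamma),
\end{equation*}
and taking the infimum over $c>0$ produces $\Xi(\lambda\mid\gamma; H^g)$. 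The lower bound in \cref{eq:xi-bound} follows by repeating the argument with $-H^g$ in place of $H^g$ and flipping signs.

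For the linearization \cref{eq:xi-linearization}, set $\tilde H := H^g - \E_\gamma[H^g]$ and let $\eta := \RE(\lambda\mid\gamma)$. Because $\Lambda_\gamma(c,H^g)$ is finite in a neighborhood of $c=0$, the moment generating function $M(c) := \E_\gamma[e^{c\tilde H}]$ is smooth there with $M(0)=1$, $M'(0)=0$, $M''(0)=\var_\gamma[H^g]$, so the Taylor expansion of $\log M(c)$ gives
\begin{equation*}
  \Lambda_\gamma(c,H^g) = \frac{1}{c}\log M(c) = \frac{c}{2}\var_\gamma[H^g] + O(c^2)\quad\text{as } c\to 0^+.
\end{equation*}
Substituting this into the definition \cref{eq:xi} of $\Xi$, the candidate $c_\star := \sqrt{2\eta/\var_\gamma[H^g]}$, obtained by minimizing the leading-order expression $\tfrac{c}{2}\var_\gamma[H^g] + \eta/c$, achieves the value $\sqrt{2\eta\,\var_\gamma[H^g]}$; plugging $c_\star$ back into the full $\Lambda_\gamma$ and using $c_\star = O(\sqrt{\eta})$ together with the $O(c^2)$ remainder gives an upper bound $\sqrt{2\eta\,\var_\gamma[H^g]} + O(\eta)$. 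A matching lower bound follows by Jensen's inequality, which yields $\Lambda_\gamma(c,H^g)\geq 0$, combined with the second-order Taylor expansion from below; since the exact same argument applies to $-H^g$ (with the same centered variance), one obtains \cref{eq:xi-linearization} for both signs.

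The routine step is the Donsker--Varadhan application; the main obstacle is the linearization, specifically the uniform control of the $O(c^2)$ remainder in $\Lambda_\gamma$ near the optimizer $c_\star$. The hypothesis that $\Lambda_\gamma(c,H^g)$ is finite in a neighborhood of $c=0$ is exactly what makes this analytic expansion valid and lets one assert that the remainder contributes only $O(\eta)$ to $\Xi$, which is the only delicate point of the argument.
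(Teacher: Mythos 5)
Your proposal is correct and follows essentially the same route as the paper: both apply the Donsker--Varadhan formula of \cref{lem:bound-variational-form} to the centered, scaled observable $c(H^g - \E_\gamma[H^g])$, rearrange, and optimize over $c>0$, and both obtain \cref{eq:xi-linearization} by an asymptotic expansion of the cumulant generating functional at $\RE(\lambda\mid\gamma)=0$ (details the paper delegates to the cited reference). The only point you leave implicit is that \cref{lem:bound-variational-form} is stated for \emph{bounded} $H^g$, so the bound must still be extended to general measurable $H^g$ with $\Lambda_\gamma$ finite near $c=0$ by a truncation argument, which the paper handles by citation.
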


\begin{proof}
  For any bounded and measurable observable $H^g$, replacing $H^g$ in
  \cref{lem:bound-variational-form} with $c(H^g - \E_\gamma[H^g])$,
  for $c > 0$, yields
  \begin{equation*}
    \log \E_\gamma [e^{c(H^g-\E_\gamma[H^g])}] 
    = \sup_{\lambda \ll \gamma} \left\{
      c (\E_\lambda [H^g] - \E_\gamma [H^g])
      - \RE (\lambda \mid \gamma) \right\}.
  \end{equation*}
  This variational characterization yields the bounds
  \begin{subequations}
    \begin{align}
      & \E_\lambda[H^g] - \E_\gamma[H^g] \leq 
        \frac{1}{c} \log \E_\gamma [e^{c(H^g-\E_\gamma[H^g])}] 
        + \frac{1}{c} \RE(\lambda \mid \gamma) 
        \quad \text{and} \label{eq:direct-upper-bound}\\
      & \E_\lambda[H^g] - \E_\gamma[H^g] \geq 
        -\frac{1}{c} \log \E_\gamma [e^{c(H^g-\E_\gamma[H^g])}] 
        - \frac{1}{c} \RE(\lambda \mid \gamma)
        \label{eq:direct-lower-bound}
    \end{align}
  \end{subequations}
  where the optimal bound can be found by minimizing
  \cref{eq:direct-upper-bound} (respectively maximizing
  \cref{eq:direct-lower-bound}) over $c > 0$. This implies
  \cref{eq:xi-bound} for any measurable and bounded $H^g$. This bound
  can be extended to any measurable $H^g$, by considering
  \cref{eq:direct-upper-bound,eq:direct-lower-bound} with
  $H^g_{a,b} - \E_\gamma[H^g]$ in place of $H^g - \E_\gamma[H^g]$ in
  the cumulant generating functional, where
  $H^g_{a,b} = [H^g \vee (-a)] \wedge b$ for $a,b \in \rset$, and then
  letting $a \to \infty$ under the monotone convergence theorem and
  $b\to \infty$ using the dominating function
  $e^{c(H^g - \E_\gamma[H^g])}$, following the argument given in
  \cite[p.~86]{DupuisEtAl:2015ps}. The linearization
  \cref{eq:xi-linearization} arises from an asymptotic expansion at
  $\RE(\lambda \mid \gamma) = 0$, i.e.\ when $\lambda$ is a
  perturbation of $\gamma$, which relies on first proving that the
  optimization problems in \cref{eq:xi-bound} admit a unique solution
  $c^*(\rho)$. The details follow analogously from the proofs of Lemma
  2.11 and Theorem 2.12 in \cite{DupuisEtAl:2015ps}.
\end{proof}

\Cref{thm:information-divergence}, suggests an upper and lower bound
for the weak error \cref{eq:weak-error-observables} between model
predictions with respect to the full Bayesian network formulation for
the nominal model $P$ and alternative model $Q$. \Cref{eq:xi-bound}
trades the problem of sampling the weak error
\cref{eq:weak-error-observables}, that involves sampling the QoI with
respect to both $\Qb$ and $\Pb$, for an optimization problem
\cref{eq:xi} that only requires sampling a cumulant generating
functional with respect to $\Pb$. This enables comparisons among
models in contrast to the optimal uncertainty bounds in
\cite{OwhadiEtAl:2013uq}. In the present work, we do not compare model
outputs to experimentally observed flow variables nor do we assimilate
data from experimental observations to re-calibrate the probabilistic
models in the experiments. Due to the sparsity of available data,
model verification is not typically employed in geostatistical
modeling as all available data is used to formulate and calibrate the
model (\cite{DohertyHunt:2010ap}).

We reiterate that bounds \cref{eq:xi-bound} are non-intrusively
computable and tight. The structure of the marginal performance
measure $H^g$ allows the cumulant generating functional appearing in
\cref{eq:xi-bound} to be sampled non-intrusively (i.e.\ without
changing the code for simulating the forward model) with respect to
the nominal model $\gamma$ (the proposed computationally tractable
model). Recall that $H^g$ \cref{eq:hybrid-pm} is intimately related to
the propagation of epistemic uncertainty in
\cref{fig:propagation-uncertainty} from the inputs to the
solver/simulator. In contrast to classical bounds derived from the
Pinsker or Chapman--Robbins inequalities (\cite{CoverThomas:2006in,
  Tsybakov:2009np}), \cref{eq:xi-bound} is tight in that equality is
attainable for a given QoI by a suitable $\lambda$ within a given
relative entropy distance of the nominal model, as suggested in
\cref{fig:bounds-across-family} (for a full discussion on tightness
see
\cite{GourgouliasEtAl:2017aa}).

In general, the hybrid information divergence \cref{eq:xi} allows one
to express different levels of confidence in various components in a
complex system and we refer to these bounds as hybrid divergences
following the terminology in \cite{DupuisChowdhary:2013ae}. For
complex subsurface flow systems
(\cref{fig:layers-subsurface-flow-model}), \cref{eq:xi} possess a form
aligned with our goal of evaluating model-form uncertainty as a
modeling error. In contrast, the goal-oriented information divergences
from \cite{DupuisEtAl:2015ps} defined on product measures has the form
\begin{equation}
  \label{eq:xi-orig}
  \Xi(\lambda\otimes\nu \mid \gamma\otimes\nu; h) := \inf_{c>0}
  \left\{ \Lambda_{\gamma\otimes\nu}(c,h) + \frac{1}{c}
    \RE(\lambda \otimes\nu \mid \gamma \otimes\nu)\right\}\,,
\end{equation}
where $\Lambda_{\gamma\otimes\nu}$ is the standard risk-sensitive
performance measure given by
\begin{equation}
  \label{eq:standard-pm}
  \Lambda_{\gamma\otimes\nu}(c,h) = \frac{1}{c} \log
  \E_{\gamma\otimes\nu}[e^{c(h-\E_{\gamma\otimes\nu}[h])}]\,.
\end{equation}
The divergence \cref{eq:xi-orig} incorporates the epistemic and
aleatoric variables in a balanced manner that does not conform with
the asymmetrical way that we view these different sources of
uncertainty. Moreover, in the present setting it may not be possible
to sample $h$ directly whereas the marginal performance measure $H^g$
is the natural quantity to sample in the context of the MC FEM
approach. To make the connection between the original divergence and
hybrid divergence concrete, we observe that \cref{eq:xi-orig} depends
on the representation for $h$ while \cref{eq:xi} depends on $H^g$
(cf.\ \cref{eq:equality-h-Hg}). While both \cref{eq:xi-orig} and
\cref{eq:xi} yield valid bounds for the weak error
\cref{eq:weak-error-observables}, we have that
\begin{equation*}
  \Xi (\lambda \mid \gamma; H^g) \le \Xi(\lambda\otimes\nu \mid
  \gamma\otimes\nu; h) = \Xi (\Qb \mid \Pb; g)
\end{equation*}
due to Jensen's inequality applied to the exponential of
\cref{eq:hybrid-pm}. That is, the naive implementation of
\cref{eq:xi-orig} contains (not surprisingly) more uncertainty than
the hybrid divergence \cref{eq:xi}.

In the sequel, we write
\begin{equation*}
  \Xi_{+} := \Xi\, , 
  \quad \text{and} \quad
  \Xi_{-}(\cdot \mid \cdot; H^g) := -\Xi(\cdot \mid \cdot; -H^g)
\end{equation*}
to have a short notation for distinguishing the upper bound from the
lower bound. Next, we emphasizes that \cref{eq:xi-bound} applies to
all alternative models within a given information budget.

\begin{corollary}
  \label{cor:info-budget}
  Let the assumptions of \cref{thm:information-divergence} hold and
  let $\rho := \RE(\lambda \mid \gamma)$. Then
  \begin{equation}
    \label{eq:uq-bounds-rho}
    - \inf_{c>0} \left\{ \Lambda_\gamma(c,-H^g) + \frac{\rho}{c} \right\} 
    \leq \E_\Qb[g(\bar{u})] - \E_\Pb[g(\bar{u})] \leq 
    \inf_{c>0} \left\{ \Lambda_\gamma(c,H^g) + \frac{\rho}{c} \right\}
  \end{equation}
  for all $\Qb = \eta \otimes \nu$ such that
  $\RE(\eta \mid \gamma) \leq \rho$.
\end{corollary}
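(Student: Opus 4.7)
The plan is to deduce the corollary directly from Theorem~\ref{thm:information-divergence} by exploiting the monotonicity of the goal-oriented hybrid divergence $\Xi$ in its relative-entropy argument. Concretely, since the hypothesis $\RE(\eta\mid\gamma) \leq \rho < \infty$ forces $\eta \ll \gamma$, Theorem~\ref{thm:information-divergence} applies with $\lambda$ replaced by $\eta$, yielding
\begin{equation*}
  -\Xi(\eta \mid \gamma; -H^g)
  \leq \E_\Qb[g(\bar u)] - \E_\Pb[g(\bar u)]
  \leq \Xi(\eta \mid \gamma; H^g)
\end{equation*}
for each individual $\Qb = \eta \otimes \nu$ with $\RE(\eta\mid\gamma) \leq \rho$. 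The remaining task is to pass from the $\eta$-dependent bound to a uniform bound expressed only through the budget $\rho$.

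For the uniform bound I would invoke the following elementary monotonicity. Inspecting the definition \cref{eq:xi}, for each fixed $c > 0$ the map $r \mapsto \Lambda_\gamma(c,\pm H^g) + r/c$ is nondecreasing in $r \geq 0$, and infima of nondecreasing families are themselves nondecreasing, so $r \mapsto \inf_{c>0}\{\Lambda_\gamma(c,\pm H^g) + r/c\}$ is nondecreasing. Taking $r = \RE(\eta\mid\gamma) \leq \rho$ therefore gives
\begin{equation*}
  \Xi(\eta \mid \gamma; H^g)
  \leq \inf_{c>0}\!\Bigl\{\Lambda_\gamma(c,H^g) + \tfrac{\rho}{c}\Bigr\}.
\end{equation*}

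For the lower bound the same monotonicity applied to $-H^g$ gives $\Xi(\eta\mid\gamma;-H^g) \leq \inf_{c>0}\{\Lambda_\gamma(c,-H^g) + \rho/c\}$, and negating reverses the inequality to produce the desired one-sided estimate. Combining the two then yields \cref{eq:uq-bounds-rho}, and the ``for all $\Qb$'' quantifier is obtained simply because the resulting bound on the right-hand side no longer depends on the particular alternative $\eta$, only on the budget $\rho$.

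I expect no real obstacle beyond careful bookkeeping of signs: the lower bound in Theorem~\ref{thm:information-divergence} involves $-\Xi(\cdot\mid\cdot;-H^g)$, so the monotonicity argument must be applied inside the negation, and one should double-check that the inequality $\RE(\eta\mid\gamma) \leq \rho$ is used in the direction consistent with the infimum. A minor point worth mentioning, but not needed for the argument, is that the family of admissible $\Qb$ is parametrized by measures $\eta$ on $\mathcal Y$ only (the conditional kernel $\nu$ is held fixed), which is the natural setting for the hybrid product structure $\Qb = \eta \otimes \nu$ introduced above.
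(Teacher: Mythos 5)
Your argument is correct and is precisely the intended one: the paper leaves \cref{cor:info-budget} unproved as an immediate consequence of \cref{thm:information-divergence}, and the only content is the monotonicity of $r \mapsto \inf_{c>0}\{\Lambda_\gamma(c,\pm H^g) + r/c\}$ in $r$, which you identify and apply with the signs handled correctly on both sides. Nothing further is needed.
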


\Cref{cor:info-budget} suggests a bound across an infinite dimensional
family of alternative models,
\begin{equation*}
  \label{eq:alt-family}
  \mathcal{Q} := \{Q = \eta \otimes \nu 
  : \RE(\eta \mid \gamma) \leq \rho\}\,,
\end{equation*}
that includes both parametric and non-parametric perturbations as
depicted in \cref{fig:bounds-across-family}. We observe that the bound
\cref{eq:uq-bounds-rho} requires sampling the cumulant with respect to
the nominal model and then optimizing over $c$ once for a given $\rho$
and QoI. In contrast, computing the weak error
\cref{eq:weak-error-observables} directly for every
$\Qb \in \mathcal{Q}$ would be computationally infeasible. Although we
cannot give a general insight on how to interpret the information
budget, one can get a sense of the family of alternative models that
fall within a given budget, for example, how qualitatively different a
model you can buy for a fixed amount of relative entropy, by computing
a relative entropy landscape (cf.\
\cref{fig:mod1_relandscape,fig:experiment2_RE_hist}). In the context
of R{\'e}nyi divergence in a model of differential privacy,
\cite{DuchiJordanWainwright:2013lp} introduces a level of privacy,
$\alpha$, that is philosophically similar to $\rho$ when viewed as an
allowable level of model-form uncertainty.

\begin{figure}
  \centering
  \includegraphics[width=0.6\textwidth]{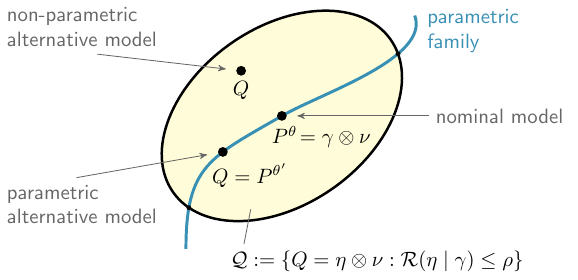}
  \caption{For a given information budget $\rho$,
    \cref{cor:info-budget} suggests tight and computable upper and
    lower bounds for the weak error \cref{eq:weak-error-observables}
    between a nominal model $\Pb$ for every alternative model
    $\Qb \in \mathcal{Q}$, an infinite dimensional family that
    includes both parametric and non-parametric perturbations. The
    information budget $\rho$ can be interpreted as an allowable level
    of model-form uncertainty.}
  \label{fig:bounds-across-family}
\end{figure}

\begin{remark}[Data Processing Inequality]
  \label{rmk:data-processing}
  For any invertible transformation $T$,
  \begin{equation*}
    \label{eq:re-data-processing-eq}
    \RE (\lambda \mid \gamma) = \RE (T(\lambda)\mid T(\gamma)),
  \end{equation*}
  from the Data Processing Inequality (see for example
  \cite{GilEtAl:2013rd}). Thus, we can replace the distribution of the
  conductivity $\bar{a}$ with the distribution of $\log \bar{a}$ in
  any relative entropy statement without a loss of information.
\end{remark}

\subsection{Uncertainty intervals}%
\label{sec:uncertainty-intervals}%

We end this section with another perspective on \cref{eq:xi-bound} for
data-informed prediction. For a given nominal model
$\Pb = \gamma \otimes \nu$, \cref{eq:xi-bound} can be rewritten as
\begin{equation*}
  \E_\Pb[g(\bar{u})] - \Xi(\lambda \mid \gamma; - H^g)
  \leq \E_\Qb[g(\bar{u})] \leq 
  \E_\Pb[g(\bar{u})] + \Xi(\lambda \mid \gamma; H^g),
\end{equation*}
yielding an uncertainty interval for an alternative model prediction.
The alternative model $Q = \lambda \otimes \nu$ falls within a region
of confidence given by the nominal model prediction $\pm \Xi$, i.e.\
$\E_\Qb[g(\bar{u})] \in \E_\Pb[g(\bar{u})] \pm \Xi(\lambda \mid
\gamma; \pm H^g)$,
providing a guarantee on a QoI with respect to an alternative model
$\Qb$. For failure probabilities \cref{eq:failure-probability} (such
as the goal functionals $g_1$ and $g_2$ to appear in
\cref{sec:screening-and-sa,sec:data-informed-bounds}), the hybrid
information divergences have a particularly simple form that gives a
confidence interval for the $\Qb$-probability of failure.

\begin{theorem}[Uncertainty Interval for $\Qb$-failure]
  \label{thm:uq-interval-failure}
  For a nominal model $\Pb = \gamma \otimes \nu$, let
  $g(\bar{u}) = \indic_A$ for $A \subset \Omega$ with $\Pb(A) = p$ and
  let $\rho := \RE(\lambda \mid \gamma)$. Then,
  \begin{equation*}
    - \min_{c>0} \left\{ \frac{1}{c}\log(pe^{-c}+1-p) 
      + \frac{\rho}{c}\right\} 
    \leq \Qb(A) \leq 
    \min_{c>0} \left\{ \frac{1}{c} \log(pe^c+1-p)
      + \frac{\rho}{c} \right\},
  \end{equation*}
  for every alternative model $\Qb = \eta \otimes \nu$ such that
  $\RE(\eta \mid \gamma) \leq \rho$.
\end{theorem}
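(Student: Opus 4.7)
The plan is to specialize the upper and lower bounds of \cref{cor:info-budget} to the indicator goal functional $g(\bar{u}) = \indic_A$ and to obtain an elementary closed form for the weighted cumulant generating functional $\Lambda_\gamma(c, \pm H^g)$ via a Jensen step that reduces the hybrid cumulant to a Bernoulli moment generating function. This is essentially the mechanism alluded to in the remark following \cref{thm:information-divergence}.

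First I would substitute $h(y,z) = \indic_A$ into \cref{eq:hybrid-pm}, so that the marginal performance measure is the conditional failure probability $H^g(y) = \nu(A \mid y) \in [0,1]$ and hence $\E_\gamma[H^g] = \Pb(A) = p$. The centered cumulant then reads $\Lambda_\gamma(c, H^g) = \frac{1}{c}\log \E_\gamma[e^{cH^g}] - p$, and the key step is to bound the exponential moment using Jensen's inequality, noting that $H^g$ is itself a conditional expectation and $\exp$ is convex:
\begin{equation*}
  e^{cH^g(y)} = \exp\!\left( c \int_{\mathcal{Z}} \indic_A\, \nu(\dd z \mid y) \right) \leq \int_{\mathcal{Z}} e^{c\indic_A}\, \nu(\dd z \mid y).
\end{equation*}
Integrating against $\gamma$ collapses the iterated integral into the joint $\Pb = \gamma \otimes \nu$ and produces the Bernoulli moment generating function $\E_\gamma[e^{cH^g}] \leq pe^c + (1-p)$. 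Substituting the resulting bound $\Lambda_\gamma(c, H^g) \leq \frac{1}{c}\log(pe^c + 1-p) - p$ into the upper branch of \cref{cor:info-budget} and using that $\rho/c$ is monotone in $\rho$ (so the statement extends from $\lambda$ to all $\eta$ satisfying $\RE(\eta \mid \gamma) \leq \rho$), the centering term $-p$ cancels exactly against $\E_\Pb[\indic_A] = p$ moved across the weak-error inequality, yielding the stated upper bound on $\Qb(A)$.

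For the lower bound I would carry out the symmetric argument with $-H^g$ in place of $H^g$: the same Jensen step gives $\E_\gamma[e^{-cH^g}] \leq pe^{-c} + (1-p)$, and after accounting for the sign conventions in $\Xi_-$ this reproduces the stated lower bound $-\min_{c>0}\{\frac{1}{c}\log(pe^{-c} + 1-p) + \rho/c\}$. The main obstacle is purely bookkeeping: tracking the signs in $\Xi_\pm$ and verifying that the $\pm p$ centering cancels cleanly against $\E_\Pb[\indic_A] = p$. There is no genuine analytic difficulty, because the Jensen step is exactly what converts the otherwise intractable joint cumulant into a scalar expression depending only on $p$ and $\rho$.
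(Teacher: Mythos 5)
Your argument is correct and reaches the stated bounds, but it takes a genuinely different route at the one substantive step. The paper evaluates the risk sensitive performance measure \emph{exactly}, $\Lambda_\gamma(c,\pm H^g)=\frac{1}{c}\log(pe^{\pm c}+1-p)\mp p$ as in \cref{eq:Lambda-failure-prob}, which amounts to treating $H^g$ as a Bernoulli$(p)$ random variable; this is an identity precisely when $\nu(A\mid y)\in\{0,1\}$ for $\gamma$-a.e.\ $y$ (e.g.\ a deterministic solver given $y$, consistent with the standing assumption that $u_0$ and $f$ are deterministic), and that exactness is what the paper later invokes to claim the concentration bounds of \cref{thm:concentration} hold with equality for failure probabilities. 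You instead only \emph{upper-bound} the exponential moment by the Bernoulli moment generating function, applying Jensen to the conditional expectation $H^g(y)=\nu(A\mid y)\in[0,1]$. That inequality suffices for the theorem: replacing $\Lambda_\gamma(c,\pm H^g)$ by an upper bound can only weaken each one-sided bound in \cref{cor:info-budget}, so the chain of inequalities survives, and your sign bookkeeping for $\Xi_\pm$ together with the cancellation of the centering term $\mp p$ against $\E_\Pb[\indic_A]=p$ is right. What your version buys is validity without any degeneracy assumption on $\nu(\cdot\mid y)$; what it gives up is that the resulting interval is no longer the exact hybrid divergence (hence not guaranteed tight) when $H^g$ takes values strictly inside $(0,1)$. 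One small mislabel: the Jensen comparison in the remark following \cref{thm:information-divergence} is the one relating the hybrid divergence to the original divergence \cref{eq:xi-orig}, not the step you use here, although the mechanism is the same.
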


\begin{proof}
  The risk-sensitive performance measure is
  \begin{equation}
    \label{eq:Lambda-failure-prob}
    \Lambda_\gamma(c, \pm H^g) 
    = \frac{1}{c} \log \E_\gamma [e^{\pm c(H^g - \E_\gamma[H^g])}] 
    =  \frac{1}{c} \log (pe^{\pm c} + 1 - p) \mp p
  \end{equation}
  and thus the bounds follows immediately from \cref{eq:xi-bound}.
\end{proof}

The remainder of this paper focuses on applications of the hybrid
information divergences to UQ. First, in \cref{sec:screening-and-sa}
we apply \cref{thm:information-divergence} to derive bounds for
parametric sensitivity analysis by considering alternative models that
can be represented by small parametric perturbations of the nominal
model. Then in \cref{sec:data-informed-bounds} we examine a more
exploratory UQ task and derive bounds for model misspecification due
to sparse data where the nominal and alternative models cannot, in
general, be described by small perturbations. Finally, in
\cref{sec:efficient-sampling-risk}, we leverage the connection between
certain concentration inequalities and the hybrid divergences for
efficient computing.

\section{The simplest UQ application: parametric sensitivity
  analysis}%
\label{sec:screening-and-sa}%

Presently we apply the tools developed in
\cref{sec:hybr-inform-diverg} to sensitivity analysis when the model
inputs are specified by a parametric geostatistical model. This
setting represents the simplest UQ application of the hybrid
information divergences in that we have tight control over the
perturbations and hence over the alternative models under
consideration (see \cref{fig:bounds-across-family} where the
parametric perturbations are tightly controlled). Although simple, the
example nonetheless represents an important UQ task and allows us to
demonstrate the tightness and robustness of the bounds derived from
the hybrid information divergences. In principle, this approach can be
applied to study non-parametric models, i.e.\ models that are infinite
dimensional in the parameter space, such as a gPC representation of
the conductivity in combination with a stochastic Galerkin method for
the solver. However, we note that for our specific application of
interest with a lognormal conductivity such an approximation is not
guaranteed to converge due to Proposition 4.2 in
\cite{ErnstEtAl:2012pc}.

In the section that follows, we begin by providing notation and
motivation for the parametric sensitivity analysis. In
\cref{sec:si-and-small-perturbations} we give
\cref{cor:screening-bound} containing a cheaply computed bound that
can be used to efficiently screen for insensitive parameter
directions. Then in \cref{sec:sa-reduced-model}, we apply
\cref{thm:information-divergence} to obtain accurate and robust bounds
for sensitivity analysis. Finally, in
\cref{sec:computability-of-bounds} we provide details on the
implementation. We emphasize that although a one-dimensional example
problem is considered, the techniques demonstrated easily scale to
higher dimensions.

\subsection{Parametric geostatistical models and sensitivity indices}%
\label{sec:motivation-sa}
For a given probability space $(\Omega, \mathcal{F}, \Pb)$ we consider
the two-point boundary value problem
\begin{equation}
  \label{eq:1d-rpde}
  -(a^\theta (\omega, x) u^\prime(\omega, x))^\prime = 1, 
  \qquad \text{for } x \in [0,1],
\end{equation}
subject to $u(\omega, 0) = 0$ and
$a^\theta(\omega, 1) u'(\omega, 1) = 1$, where randomness enters only
through a scalar-valued log-normal process $a^\theta$ that depends on
a vector of hyperparameters $\theta \in \mathbf{R}^k$. In particular,
we consider $\log a^\theta$ with mean $\mu$ and squared-exponential
type two-point covariance function $C$ given by,
\begin{equation}
  \label{eq:cov-se-nugget}
  C(r) =   \begin{cases} 
    \sigma^2 e^{-|r/\sqrt{2}\ell|^2}, 
    & \text{for } r > 0,\\
    \tau^2 + \sigma^2, & \text{for }  r = 0,
  \end{cases}
\end{equation}
where $r = |x - \tilde{x}|$ for $x,\tilde{x} \in [0,1]$.

For the mean and covariance above, $a^\theta$ is a stationary,
isotropic random field where the hyperparameters of interest are
$\theta = (\mu, \sigma^2, \ell, \tau^2) \in \mathbf{R}^4$. In
applications, these hyperparameters have geostatistical
interpretations that play a role in fitting the model for $a^\theta$
from data; $\mu$ is related to the overall trend, $\sigma^2$ is
related to the sill measurement, $\ell$ is related to the spatial
correlation length, and $\tau^2$ is related to the nugget effect or
microscale variability (\cite{GelfandEtAl:2010hb}). The sample paths
of the process $a^\theta$ exhibit qualitatively different behavior
across a range of hyperparameter values and it is therefore natural to
question the sensitivity of a QoI with respect to parametric modeling
assumptions on the conductivity field. With a view toward employing
the hybrid information divergences in \cref{sec:hybr-inform-diverg},
we denote the finite dimensional distributions
$(\bar{a}^\theta_n) \sim \gamma$ and
$(\bar{a}^{\theta'}_n) \sim \gamma'$ where
$\theta' = \theta + \epsilon v$ is a small perturbation for
$\epsilon >0$ in the direction $v \in \mathbf{R}^4$ with $|v|=1$. Then
we consider the joint probability measures
$\Pb^\theta = \gamma \otimes \nu$ and
$\Pb^{\theta'} = \gamma' \otimes \nu$ that correspond to the nominal
and perturbed parameters of the geostatistical model where we denote
the distribution of the corresponding finite element solution by
$(\bar{u}_n) \sim \nu(\dd z \mid \cdot)$. We would like to understand
the sensitivity of $\E_{\Pb^\theta}[g(\bar{u})]$ with respect to
distributional assumptions on $\Pb^\theta$ and in particular to
quantify worst-case scenarios concerning this sensitivity with a view
toward informing decision tasks.

For a given goal functional $g$, we define the sensitivity index,
\begin{equation}
  \label{eq:parametric-si}
  \mathcal{S} (v, \theta; g) 
  = \theta \lim_{\epsilon \to 0} \frac{
    \E_{\Pb^{\theta+\epsilon v}}[g(\bar{u})] 
    - \E_{\Pb^{\theta}}[g(\bar{u})]}{\epsilon},
\end{equation}
that describes the sensitivity of a given goal functional $g$ with
respect to $\theta$ in the direction $v$, provided $\mathcal{S}$
depends continuously on $\theta$. In the limit of small $\epsilon$,
$\mathcal{S}$ converges to the logarithmic derivative
$\partial_{\log \theta} \E_{\Pb^\theta}[g(\bar{u})] = \theta
\partial_\theta \E_{\Pb^\theta}[g(\bar{u})]$, a scaling chosen to
control for differences in the orders of magnitude of the
hyperparameters.

Computing a classical gradient approximation of $\mathcal{S}$ in each
parameter direction for each QoI represents a nontrivial computational
cost even for the simple model problem \cref{eq:1d-rpde}. A naive
finite difference approximation of the sensitivity index would require
sampling with respect to both $\Pb^\theta$ and $\Pb^{\theta^\prime}$
where each sample involves a call to a PDE solver for each direction
$v$ in $\theta' = \theta + \epsilon v$. Moreover, such a gradient
approximation introduces a bias error that must be taken into account;
for a better approximation of the sensitivity, corresponding to small
$\epsilon$, the variance of the approximation increases and therefore
our confidence of it decreases. While reduced variance methods for
gradient approximations exist (\cite{Glasserman:2003mc,
  GlassermanYao:1992gg}), our direction here is an altogether
different one. In contrast, \cref{eq:xi-bound} in
\cref{thm:information-divergence} yields tight non-gradient based
estimates for the sensitivity \cref{eq:parametric-si} that only
require sampling with respect to the nominal model $\Pb^\theta$.
Before considering these more accurate bounds in
\cref{sec:sa-reduced-model}, we first demonstrate in
\cref{sec:si-and-small-perturbations} a cheaply computed bound,
derived from expansion \cref{eq:xi-linearization} in
\cref{thm:information-divergence}, that can be used to screen for
insensitive parameter directions.

\subsection{Fast screening for small perturbations}%
\label{sec:si-and-small-perturbations}%

While it is always advantageous if the number of parameters to include
in the full sensitivity analysis can be reduced, the efficiency gain
is acute when simulations are computationally expensive such as
involving successive calls to a PDE solver. Following from
\cref{eq:xi-linearization} in \cref{thm:information-divergence}, we
consider a linearization specialized to small perturbations that
relies on the Fisher Information matrix (FIM). As the FIM and can be
computed cheaply, that is, without sampling, the linearized bound can
be used to efficiently screen for insensitive parameter directions. We
recall that the FIM for a parametric family of distributions
$\Pb^\theta$ is given by
\begin{equation*}
  \label{eq:fim}
  \mathcal{I}(\theta) := \int_{\rset^d} \nabla_\theta \log p(x;\theta) 
  (\nabla_\theta \log p(x;\theta))^\top p(x;\theta) \dd x,
\end{equation*}
where $p(x; \theta)$ is the density conditional on the value of
$\theta$ (a classical definition from \cite{Wasserman:2013as}).

\begin{corollary}[Efficient Screening]
  \label{cor:screening-bound}
  For a smooth parametric family $\Pb^\theta$ and $\epsilon >0$,
  \begin{equation*}
    \label{eq:screening-bound}
    \frac{1}{\epsilon} |\E_{\Pb^{\theta'}}[g(\bar{u})] 
    - \E_{\Pb^{\theta}}[g(\bar{u})] |
    \le \sqrt{\var_{\gamma}[H^g]} 
    \sqrt{v^\top \mathcal{I}(\theta) v} + O(\epsilon)
  \end{equation*}
  where $\mathcal{I}(\theta)$ is the FIM associated with $\Pb^\theta$.
  Hence
  \begin{equation}
    \label{eq:S-screening-bound}
    |\mathcal{S}(v,\theta;g)| \leq \theta \sqrt{\var_{\gamma}[H^g]}
    \sqrt{v^\top \mathcal{I}(\theta) v}.
  \end{equation}
\end{corollary}

The bounds suggested in \Cref{cor:screening-bound} are for small
$\epsilon$ perturbations of the nominal model in terms of the variance
of the marginal performance measure $H^g$ and the FIM. Recall that
$H^g$ is related to the distribution of the QoI given the distribution
of input parameters. Reminiscent of the variance-bias trade-off of
other information information-based criterion for assessing model
selection in statistics (for example Akaike Information Criterion and
Bayesian Information Criterion \cite{BurnhamAnderson:2002ms}),
importantly \cref{eq:S-screening-bound} is a goal-oriented quantity
that incorporates the output of the forward model $H^g$.

\Cref{cor:screening-bound} follows from the general non-infinitesimal
linearization \cref{eq:xi-linearization} by noticing that the relative
entropy has the expansion
\begin{equation*}
  \RE(\Pb^{\theta + \epsilon v} \mid \Pb^\theta) 
  = \frac{\epsilon^2}{2} v^\top \mathcal{I}(\theta)v + O(\epsilon^3)
\end{equation*}
when considering small perturbations to a smooth parametric family of
probability measures; the complete proof follows from results in
\cite{DupuisEtAl:2015ps}. A similar linearization has been used in
chemical kinetics to screen for insensitive parameter directions in
the situation where the number of parameters is large
(\cite{ArampatzisEtAl:2015as, TsourtisEtAl:2015}). For both a
multivariate normal distribution and log-normal distribution
characterized by mean $\mu(\theta)$ and covariance $\Sigma(\theta)$,
the $i,j$ component of FIM can be expressed as
\begin{equation}
  \label{eq:fim-log-norm}
  v_i^\top \mathcal{I}(\theta) v_j
  = \frac{\partial \mu^\top}{\partial \theta_i} \Sigma 
  \frac{\partial \mu}{\partial \theta_j} 
  + \frac{1}{2} \trace\left(\Sigma^{-1} 
    \frac{\partial \Sigma}{\partial \theta_i} 
    \Sigma^{-1} \frac{\partial \Sigma}{\partial \theta_j} \right),
\end{equation}
(see for example \cite{KomorowskiEtAl:2011sa}). An analysis of the
singular value decomposition of the FIM can then reveal arbitrary
parameter directions that are relatively insensitive to perturbations.

Recall that $\mathcal{S}$, defined in \cref{eq:parametric-si}, is the
sensitivity index based on the logarithmic derivative to control for
differences in magnitude between the parameters. At present we assume
that choice of solver is independent of the choice of parameters; in
this case, the variance term of the marginal performance measure in
\cref{eq:S-screening-bound} is fixed and the sensitivity of parameters
can be screened based on the appropriately scaled FIM appearing in
\cref{eq:S-screening-bound}. In general, the solver and hence $\nu$
might depend on the parameters as these affect the regularity of the
random field. In \cref{fig:fim_SE_screening}, the screening index,
\begin{equation}
  \label{eq:screening-index}
  J(i,i) = \theta_i \sqrt{ v_i^\top \mathcal{I}(\theta) v_i },
\end{equation}
is calculated for each $i = 1,\dots,4$, that is for each of the
principal parameter directions $\{\mu, \sigma^2, \ell, \tau^2\}$
corresponding to the diagonals of the FIM. These indices are compared
across a range of nominal models for a fixed goal functional.
\Cref{fig:fim_SE_screening} demonstrates the relative insensitivity of
perturbations in $\mu$ and $\sigma^2$ over the range of nominal models
where $\ell$ and $\tau^2$ vary for $\mu = 0.8$ and $\sigma^2 = 4$. $J$
is computed without sampling using expression \cref{eq:fim-log-norm}
and identifies that the directions $\mu$ and $\sigma^2$ might be
excluded from the full sensitivity analysis for the given goal
functional since the screening index is small relative to the value
for other directions.

\begin{figure}
  \centering
  \includegraphics[scale=1]{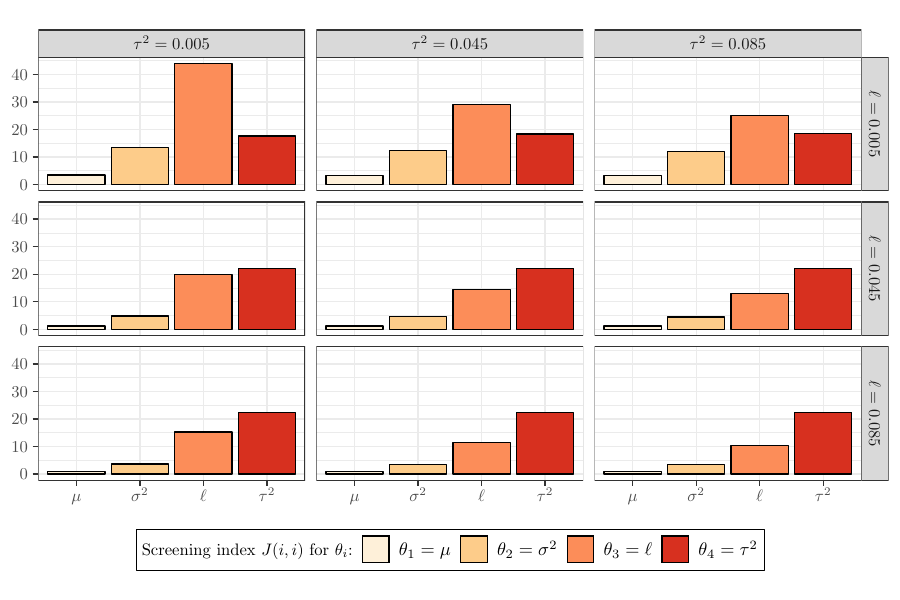}
  \caption{The screening index $J$ in \cref{eq:screening-index} for
    the logarithmic-derivative based parametric sensitivity depends on
    the FIM and can be computed cheaply (without sampling) thus
    providing an efficient method for screening parameters. In this
    instance, $J$ indicates the directions $\mu$ and $\sigma^2$ are
    relatively insensitive compared to perturbations in $\ell$ and
    $\tau^2$ over nine different nominal models for a fixed QoI
    corresponding to a failure probability.}
  \label{fig:fim_SE_screening}
\end{figure}

\subsection{Robust bounds and worst-case scenarios}%
\label{sec:sa-reduced-model}%

Next, we demonstrate bounds based on \cref{eq:xi-bound} in
\cref{thm:information-divergence} that are more accurate than the
linearized bounds at the cost of being more computationally expensive.
To investigate the performance for parametric sensitivity analysis, we
fix a nominal model $P^\theta$, with hyperparameters
$\theta = (\mu=0.8, \sigma^2=4, \ell=0.005, \tau^2=0.045)$, and
consider the sensitivity with respect to alternative models
$P^{\theta+\epsilon v}$ corresponding to small perturbations in the
$\ell$ and $\tau^2$ which we denote by $\epsilon(\ell)$ and
$\epsilon(\tau^2)$ (see also \cref{fig:mod1_relandscape}). We also fix
the goal functionals
\begin{subequations}
  \begin{align}
    &g_1(\bar{u}) = \indic_{\{\bar{u}(1) > 1.2\}}\,, 
      \label{eq:sa-goal-functionals-g1}\\
    &g_2(\bar{u}) =\indic_{\{0.25 < \bar{u}(1) <0.75\}}\,, 
      \quad \text{and} \label{eq:sa-goal-functionals-g2}\\
    &g_3(\bar{u}) = \min (u(1), 3)\,. 
      \label{eq:sa-goal-functionals-g3}
  \end{align}
\end{subequations}
\Cref{eq:sa-goal-functionals-g1,eq:sa-goal-functionals-g2} are
indicator functions (QoI corresponding to failure probabilities cf.\
\cref{thm:uq-interval-failure}) and \cref{eq:sa-goal-functionals-g3}
is a point estimate with an enforced upper bound.

In \cref{fig:mod1_uq_bounds_ell,fig:mod1_uq_bounds_tau}, a scaled
hybrid information divergence \cref{eq:xi}
\begin{equation}
  \label{eq:xi-pm-star}
  \frac{\theta}{\epsilon} \, \Xi_{\pm} (\gamma' \mid \gamma; H^{g_i}),
\end{equation}
is compared to the reference quantity,
\begin{equation}
  \label{eq:finite-diff}
  \hat{\Delta}(\epsilon, M; g_i) = 
  \frac{\theta}{\epsilon} \left(E_{\gamma'}^M[g_i] 
    - E_{\gamma}^M [g_i]\right),
\end{equation}
a finite difference approximation of the sensitivity $\mathcal{S}$
where
\begin{equation}
  \label{eq:M-sample-avg} E_\gamma^M (f) = \frac{1}{M} \sum_{j=1}^{M}
  f(\omega_j)
\end{equation} 
denotes the sample average based on $M$ independent and identically
distributed samples of $f$ drawn with respect to $\gamma$. Each
observation appearing in
\cref{fig:mod1_uq_bounds_ell,fig:mod1_uq_bounds_tau} is based on the
mean of $\num{e2}$ runs of $M=\num{e3}$ samples and the confidence
intervals denote two standard deviations from the corresponding sample
mean.

\begin{figure}
  \centering \subfloat[]{\label{fig:mod1_uq_bounds_ell}
    \includegraphics[scale=0.9]{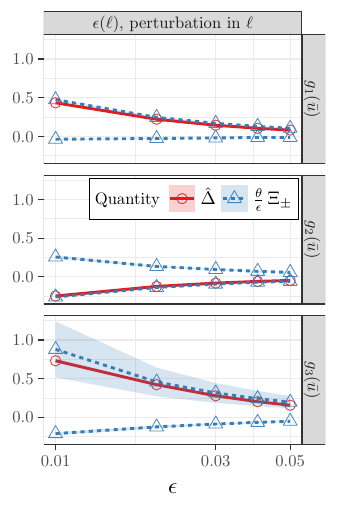}}
  \subfloat[]{\label{fig:mod1_uq_bounds_tau}
    \includegraphics[scale=0.905]{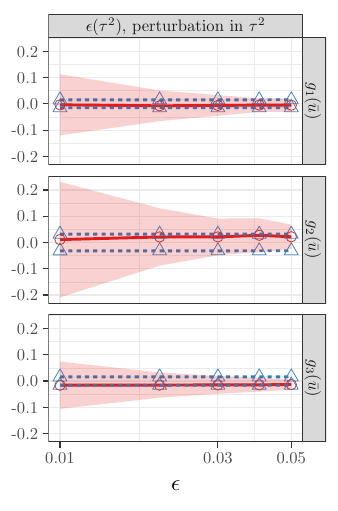}} \hfill
  \subfloat[]{\label{fig:mod1_relandscape}
    \includegraphics[scale=.77]{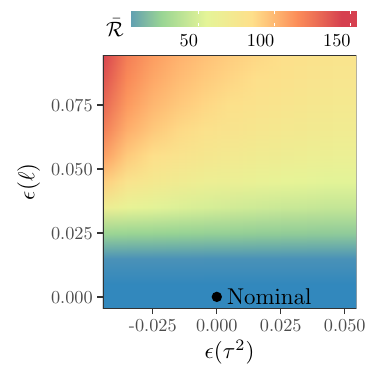}}
  \caption{In (a) and (b), the bounds \cref{eq:xi-pm-star} provide a
    tight estimate of the logarithmic-derivative based parametric
    sensitivity index \cref{eq:finite-diff} for bounded goal
    functionals
    \cref{eq:sa-goal-functionals-g1,eq:sa-goal-functionals-g2,eq:sa-goal-functionals-g3}.
    The shaded regions in \cref{fig:mod1_uq_bounds_ell} and
    \cref{fig:mod1_uq_bounds_tau} denote confidence intervals of two
    standard deviations from the corresponding sample mean
    \cref{eq:M-sample-avg} based on $10^2$ runs of $M=10^3$ samples.
    Comparing (a) to (b) we observe that the bounds in (b) are
    captured by the corresponding bounds in (a); the relative entropy
    landscape in (c) reveals an explanation, namely, that the
    information budget established by $\epsilon(\ell)$, i.e.\ small
    $\epsilon$ perturbations in $\ell$, contains corresponding
    perturbations $\epsilon(\tau^2)$. Thus, we note the bounds
    \cref{eq:xi-pm-star} in (a) are robust, providing a worst-case
    scenario envelope for all perturbations within an information
    budget (cf.\ \cref{cor:info-budget}).}
  \label{fig:mod1}
\end{figure}

We observe in \cref{fig:mod1_uq_bounds_ell,fig:mod1_uq_bounds_tau}
that for each QoI (facet corresponding to rows), the bounds
\cref{eq:xi-pm-star} provide an accurate estimate of the sensitivity
for $\epsilon(\ell)$ and $\epsilon(\tau^2)$. In particular, the plots
are suggestive of the tightness of the bounds derived from the hybrid
information divergence. Further, a comparison of
\cref{fig:mod1_uq_bounds_ell} to \cref{fig:mod1_uq_bounds_tau},
indicates that the bounds \cref{eq:xi-pm-star} are robust. For this
particular nominal model, the relative entropy landscape in
\cref{fig:mod1_relandscape} shows that $\epsilon(\tau^2)$ always fall
within the information budget established by $\epsilon(\ell)$, that
is, the level sets relating to $\epsilon(\ell)$ contain the
corresponding $\epsilon(\tau^2)$. Thus, the bounds
\cref{eq:xi-pm-star} in \cref{fig:mod1_uq_bounds_ell} are guaranteed
to contain the bounds in \cref{fig:mod1_uq_bounds_tau} by
\cref{cor:info-budget} and can be interpreted as giving the worst-case
scenario for each QoI, providing a natural way to rigorously
incorporate worst-case scenarios into the decision support framework
in \cref{fig:layers-subsurface-flow-model}.

\subsection{\emph{A posteriori} computability}%
\label{sec:computability-of-bounds}%

We emphasize that the components appearing in the argument of the
optimization problem in \cref{eq:xi}, and in particular in the
right-hand side of \cref{eq:xi-pm-star}, are \emph{a posteriori}
computable quantities that represent significant computational savings
over gradient approximations. Moreover, \cref{eq:xi-pm-star}
incorporate worst-case scenarios that might not be efficiently
observed using traditional estimates of the sensitivity.
In the previous numerical experiment, we approximate
\cref{eq:xi-pm-star} by
\begin{equation*}
  \Xi_{\pm} (\gamma' \mid \gamma ; H^g) \approx \pm \xi(c^*, \pm H^g)
\end{equation*}
where $c^* = \argmin_{c>0} \xi (c, H^g)$ and
\begin{equation}
  \label{eq:estimate-xi}
  \xi (c , H^g) 
  := \hat{\Lambda}_\gamma(c, H^g) 
  + \frac{1}{c} \bar{\RE}(\gamma' \mid \gamma)
\end{equation} 
for suitable approximations $\hat{\Lambda}_\gamma$ and $\bar{\RE}$ of
the risk-sensitive performance measure and relative entropy,
respectively. The optimal $c^*$ as a function of
$\rho := \RE(\lambda \mid \gamma)$ has the representation
\begin{equation}
  \label{eq:optimal-cstar-rho} 
  c^*(\rho) = \frac{\sqrt{2
      \rho}}{\sqrt{\var_\gamma[H^g]}} + O(\rho),
\end{equation}
which follows from equation (2.28) of \cite{DupuisEtAl:2015ps}. For
perturbations resulting in small $\rho$ \cref{eq:optimal-cstar-rho}
can be used; otherwise, we find the optimal $c^*$ by a one-dimensional
Newton-Raphson method, a step that must be repeated for each QoI for
every alternative model under consideration.

The $\hat{\Lambda}_\gamma$ appearing in \cref{eq:estimate-xi} can be
sampled using a standard MC approximation,
\begin{equation*}
  \hat{\Lambda}_\gamma(c,H^g) =
  \frac{1}{c} \log E_\gamma^M (\exp\{c (H^g - \widehat{H^g})\}) 
  \approx \Lambda_\gamma(c,H^g),
\end{equation*}
for $\widehat{H^g} := E_\gamma^M(H^g) \approx \E_\gamma[H^g]$ where
$E_\gamma^M$ denotes the sample average \cref{eq:M-sample-avg}. This
quantity needs to be computed only once for each QoI, according to the
nominal model $\gamma$, and can then be used as in
\cref{eq:uq-bounds-rho} to test any number of alternative models
within the established information budget as in
\cref{cor:info-budget}. In contrast, the relative entropy appearing in
\cref{eq:estimate-xi} needs to be computed for every alternative model
under consideration. However, the relative entropy can be computed
without sampling using the analytic formula \cref{eq:re-multivar-norm}
together with \cref{rmk:data-processing} to replace the distribution
of the conductivity with the corresponding Gaussian. In the preceding
experiments the approximation
$\bar{\RE}(\gamma'\mid\gamma) \approx \RE(\gamma'\mid\gamma)$ is
obtained by taking the Gaussians to have the same dimension as the
finite element discretization.

\begin{remark}[Cumulant generating functions amplify variance]
  \label{rmk:sampling-pm}
  Forming an estimator involving the hybrid information divergence
  $\Xi$ as in \cref{eq:xi-pm-star} requires sampling a cumulant
  generating functional ($\hat{\Lambda}_\gamma$). Although sampled
  with respect to the nominal model, if the variance of the marginal
  performance measure is large then this variance could potentially be
  amplified by $\hat{\Lambda}_\gamma$; in
  \cref{fig:mod1_uq_bounds_ell}, the estimator of $\Xi_{+}$ for goal
  functional $g_3$ \cref{eq:sa-goal-functionals-g3} is observed to
  have higher variance than the indicator functionals
  \cref{eq:sa-goal-functionals-g1,eq:sa-goal-functionals-g2}. In
  \cref{sec:efficient-sampling-risk}, we demonstrate concentration
  inequality bounds for $\Lambda_\gamma$ that results in reduced
  variance predictions the information divergences $\Xi_\pm$.
\end{remark}

\begin{remark}[Cholesky-like covariance decomposition]
  The formula \cref{eq:re-multivar-norm} depends on the discrete
  projection $(\bar{a}_n) \approx a$. In some instances $\Sigma$ may
  be close to singular, hampering the computation of the precision
  matrix $\Sigma^{-1}$ or the $\log$-determinant. In the numerical
  experiments presented here, such issues were easily addressed using
  a Cholesky-like covariance decomposition and facts about Toeplitz
  matrices. Geostatistical models based on Markov random fields
  (\cite{RueHeld:2005}), as opposed to parametric covariance models,
  is an approach that sidesteps this difficulty and we note the
  techniques outlined here also apply to conductivities given by
  Markov random fields (see \cite{GourgouliasEtAl:2017aa}).
\end{remark}

In the next section, we examine non-parametric perturbations to a
geostatistical model. In particular, \cref{thm:information-divergence}
yields tight and robust UQ bounds in the context of model
misspecification due to sparse data.

\section{Data-informed error bounds for non-parametric perturbations}%
\label{sec:data-informed-bounds}%

In the present section, we consider model-form uncertainty in
connection with misspecification of the geostatistical model due to
lacking or incomplete data. As emphasized in the introduction, data
for our applications of interest are sparse and small perturbations to
data result in geostatistical models that are non-parametric
perturbations of the nominal model (see
\cref{fig:bounds-across-family}). By allowing us to compare the effect
that distributional assumptions on model inputs have on model outputs,
the hybrid information divergence provides a link between data and
decision tasks. In this vein, we explore how the hybrid information
divergences complement an existing inference procedure by providing
robust, data-informed bounds that give a sense of worst-case scenarios
under modeling errors. Next, we review the data set and the inference
procedure used in our experiments.

\subsection{Conductivity data and model problem}%
\label{sec:cond-data-model-prob}%

We utilize permeability data for a Brent sequence (\SI{365.76}{\meter}
by \SI{670.56}{\meter} by \SI{51.816}{\meter}) from SPE10 model 2 in
\cite{ChristieBlunt:2001sp}. Due to its importance in the North Sea
petroleum industry, the Brent sequence is well studied from a
geological perspective (\cite{Richards:1992bg}). The sequence has two
distinct phases; the upper layers of the sequence comprise a Tarbert
formation and the bottom layers comprise an Upper Ness formation. The
log-permeability of these two formations both vary by several orders
of magnitude and exhibit strikingly different spatial correlations. In
our numerical experiments, we will fit various geostatistical models
based on data from a one-dimensional slice of the upper-most level of
the Tarbert formation displayed in \cref{fig:conductivity_slice}.

\begin{figure}[]
  \centering \subfloat[]{\label{fig:conductivity_slice}
    \includegraphics[scale=0.76]{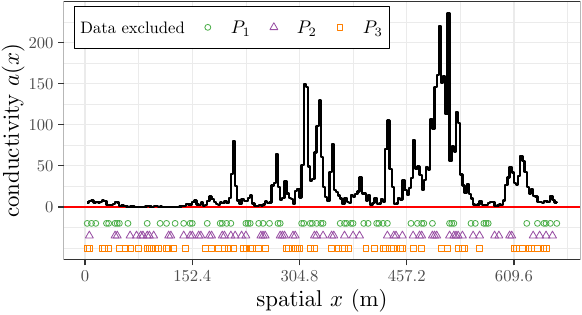}} \hfill
  \subfloat[]{\label{fig:experiment2_RE_hist}
    \includegraphics[scale=0.79]{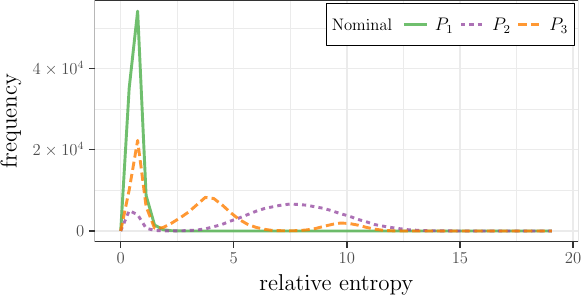}}
  \caption{In (a), a one-dimensional slice of the Tarbert formation
    data (from SPE10 model 2 in \cite{ChristieBlunt:2001sp}) used in
    numerical experiments in \cref{sec:data-informed-bounds} varies by
    orders of magnitude over the problem scale. In (b), three
    frequency distributions for the relative entropy are depicted for
    non-small perturbations of three different nominal models
    corresponding to $P_1$, $P_2$ and $P_3$ in (a). We observe that
    relative entropy distributions with respect to the family of
    alternative models \cref{eq:q-plus} are multi-modal and depend in
    a nontrivial fashion on the nominal model (cf.\ the relative
    entropy landscape in \cref{fig:mod1_relandscape} where the small
    parametric perturbations result in a relatively flat landscape in
    the $\epsilon(\tau^2)$ direction with a smooth ascent in
    $\epsilon(\ell)$).}
  \label{fig:experiment2}
\end{figure}

For the experiments that follow, we fix both a parametric form for the
geostatistical model and an inference procedure. We then consider
different geostatistical models fit from incomplete samples of the
full data set. Specifically, we assume that the available log-data are
Gaussian and then fit the parameters of the geostatistical model using
the maximum likelihood method. For a given parametric model, this
method gives parameter values that are found to maximize the
likelihood of making an observation of a particular data point given
the parameter value; this process is entirely automated by a number of
software packages and the present experiments use
`\texttt{RandomFields}' (\cite{SchlatherEtAl:2015rf}) available in R.
For convenience we shall again use the covariance
\cref{eq:cov-se-nugget} from \cref{sec:screening-and-sa}. Although we
posit a parametric form, geostatistical models resulting from fits
relying on incomplete observations of the full data set are not in
general small parametric perturbations of one another. Even small
changes to these discrete degrees of freedom may result in global
changes to parameters and hyperparameters, in contrast to the
localized sensitivity analysis in \cref{sec:screening-and-sa}.

We again consider the one-dimensional model problem \cref{eq:1d-rpde},
where the conductivity fields $(\bar{a}_n)$ are generated on a regular
uniform mesh of $n$ equally spaced cells and this projection is used
in forming the stiffness matrix for the FEM computation as well as the
covariance matrices required for the relative entropy calculations
(i.e.\ $n=d$). The FEM solution $(\bar{u}_{2n})$ is then computed
using standard, piecewise linear elements on a coarse mesh with
diameter $2n$.

In the remainder of the present section, we describe two numerical
experiments that use the hybrid information divergence
\cref{eq:xi-bound} to obtain data-informed bounds. The first
experiment in \cref{sec:model-misspecification} provides a sense of
the modeling error due to misspecification stemming from incomplete
data over a range of changes to the discrete degrees of freedom. In
the second experiment in \cref{sec:finding-worst-case-scenarios}, we
fix a nominal model based on a portion of the full data set and
examine the distribution of the relative entropy to identify an
information budget such that the hybrid information divergences give
robust bounds that include worst-case scenarios.

\subsection{Hybrid information divergences for model
  misspecification}%
\label{sec:model-misspecification}%

In \cref{fig:experiment1_g12,fig:experiment1_g3}, we demonstrate the
sensitivity of the modeling error \cref{eq:weak-error-observables}
with respect to changes in the geostatistical model resulting from the
inclusion or exclusion of a small number of data points. This
sensitivity is with respect to discrete changes to the degrees of
freedom used to fit the parametric model and is not understood in the
same sense as \cref{eq:parametric-si}. The weak errors in
\cref{fig:experiment1_g12,fig:experiment1_g3} are between a nominal
model, based on a portion of the available data set, and alternative
models that correspond to including or excluding a fixed number of
points from the data used to construct the nominal model. The
data-informed bounds derived from the hybrid information divergence
give tight and robust predictions for this weak error.

\begin{figure}[]
  \centering
  \includegraphics[scale=1]{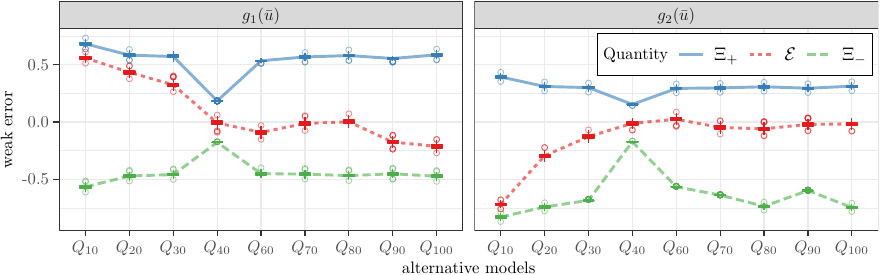}
  \caption{For the failure probability goal functionals
    \cref{eq:data-goal-functionals-g1,eq:data-goal-functionals-g2},
    the data-informed bounds $\Xi_\pm$ \cref{eq:data-xi-pm-star} give
    a tight and robust prediction of the weak error $\mathcal{E}$
    \cref{eq:weak-error-observables} over a range of changes to the
    discrete degrees of freedom. In (a) and (b) above, box plots are
    given for $\num{e2}$ observations, each of $M = \num{e3}$, samples
    where trend lines through the mean are added to indicate how the
    bounds form an envelope around model predictions.}
  \label{fig:experiment1_g12}
\end{figure}

\begin{figure}[]
  \centering
  \includegraphics[scale=1]{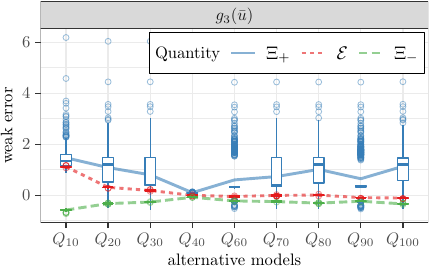}
  \caption{The data-informed bounds $\Xi_{\pm}$
    \cref{eq:data-xi-pm-star} provide a tight and robust estimate of
    the weak error $\mathcal{E}$ \cref{eq:weak-error-observables} for
    an unbounded $g_3$ \cref{eq:data-goal-functionals-g3}, however
    $\Xi_{+}$ have high variance (cf.\ \cref{rmk:sampling-pm}). We
    emphasize that the predictions here and in
    \cref{fig:experiment1_g12} are robust in that $\Xi_{\pm}$ bound
    the weak error for all alternative models that fall within a given
    information budget thus including a sense of worst-case scenarios.
    Above, box plots are given for $\num{e2}$ observations, each of
    $M = \num{e3}$, samples where trend lines through the mean are
    added to indicate how the bounds form an envelope around model
    predictions.}
  \label{fig:experiment1_g3}
\end{figure}

In particular, we begin by fixing a data set for the nominal model
$\gamma$ by sampling, uniformly at random, $50$ percent of the full
data set depicted in \cref{fig:conductivity_slice}. We then fit a
squared exponential covariance model \cref{eq:cov-se-nugget} using the
maximum likelihood method. We also fit a collection of alternative
models $\{\lambda_{10}, \dots, \lambda_{100}\}$ where $\lambda_q$ is
related to a geostatistical model that is fit using $q$ percent of the
full data set where a small number of points are added or deleted from
the subset of data used for neighboring alternative models. For
example, the data set used to construct $\lambda_{60}$ is formed by
sampling $10$ percent of the data points from the full data set not
included in $\lambda_{50}$ and then adding them to the partial set
used for $\lambda_{50}$. In keeping with the notation used in previous
sections, we then denote the nominal product measure
$\Pb = \gamma \otimes \nu$ and the alternatives
$\Qb_q = \lambda_q \otimes \nu$. Thus, the weak error displayed in
\cref{fig:experiment1_g12,fig:experiment1_g3} correspond to
alternative models related to a perturbation of the observed data set
used to fit the models, that is, to a perturbation of discrete degrees
of freedom. 

For this collection of nominal and alternative geostatistical models,
the bounds
\begin{equation}
  \label{eq:data-xi-pm-star}
  \Xi_{\pm} (\lambda_q \mid \gamma; H^g) \approx \pm \xi (c^*, \pm H^g)
\end{equation}
are computed using an expression similar to \cref{eq:estimate-xi} in
the spirit of \cref{sec:computability-of-bounds}. Box plots for
\num{e2} observations of each bound and weak error, each based on
$M = \num{e3}$ samples, are displayed in
\cref{fig:experiment1_g12,fig:experiment1_g3} along with a trend line
corresponding to the mean of the observations. The bounds and the weak
errors are examined for the goal functionals,
\begin{subequations}
  \begin{align}
    &g_1(\bar{u}) = \indic_{\{\bar{u}(x_1) > m\}}\,, 
      \label{eq:data-goal-functionals-g1}\\
    &g_2(\bar{u}) = \indic_{\{ m+s > \bar{u}(x_1) > m-s\}}\,, 
      \quad \text{and} \label{eq:data-goal-functionals-g2}\\
    &g_3(\bar{u}) = \bar{u}(x_1) / m \,, 
      \label{eq:data-goal-functionals-g3}
  \end{align}
\end{subequations}
where $m$ is the sample average of $\bar{u}(x_1)$ at the right-hand
endpoint of the domain and $s$ is the corresponding standard
deviation. We note that for
\cref{eq:data-goal-functionals-g1,eq:data-goal-functionals-g2} the
weak error is bounded in $[-1,1]$ whereas for
\cref{eq:data-goal-functionals-g3} it is unbounded and we therefore
anticipate any estimate related to \cref{eq:data-goal-functionals-g3}
to naturally have higher variance.

In \cref{fig:experiment1_g12}, related to
\cref{eq:data-goal-functionals-g1,eq:data-goal-functionals-g2}, we
observe that there is a fairly wide spread in the values for the weak
errors corresponding to different alternative models. In this
instance, the data-informed bounds \cref{eq:data-xi-pm-star} form a
tight envelope around this spread. Even in the case of
\cref{eq:data-goal-functionals-g3}, \cref{fig:experiment1_g3}
illustrates that \cref{eq:data-xi-pm-star} gives a reliable estimate
of the weak error. However, we observe that the estimator for $\Xi_+$
has high variance in this instance due to sampling strategy used for
the risk-sensitive performance measure $\hat{\Lambda}_\gamma$. As
noted in as in \cref{rmk:sampling-pm}, an alternative method will be
discussed in \cref{sec:efficient-sampling-risk}.

\subsection{Finding worst-case scenarios related to incomplete data}
\label{sec:finding-worst-case-scenarios}

Presently we examine worst-case scenarios due to changes in the
discrete degrees of freedom. The distribution of the relative entropy
with respect to a training set is used to determine an information
budget that yields robust, data-informed bounds encapsulating
worst-case scenarios.

\begin{figure}[]
  \centering
  \includegraphics[scale=1]{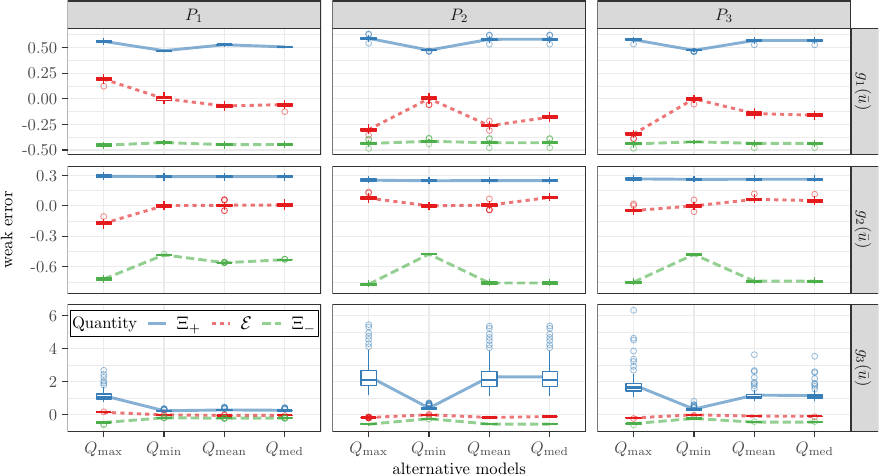}
  \caption{Choosing an alternative model $\Qb_{\max}$ related to the
    maximum relative entropy observed in a training set yields robust,
    data-informed bounds $\Xi_{\pm}$ that include a sense of
    worst-case scenarios related to the impact of incomplete data on
    the modeling process for the nominal models $\Pb_1$, $\Pb_2$, and
    $\Pb_3$ in \cref{fig:experiment2}. The matrix of plots above
    contains box plots of $\num{e2}$ observations, each of
    $M = \num{e3}$ samples, where trend lines through the means have
    been added to indicate how the bounds form an envelope around
    model predictions.}
  \label{fig:experiment2_bounds}
\end{figure}

We consider three different nominal models,
$\Pb_1 = \gamma_1 \otimes \nu$, $\Pb_2 = \gamma_2 \otimes \nu$, and
$\Pb_3 = \gamma_3 \otimes \nu$, that are related to fitting a
parametric geostatistical model $\gamma_i$ to $70$ percent of the full
data set sampled uniformly. In \cref{fig:conductivity_slice}, the full
data set is displayed in addition to the corresponding ``gaps'' in the
three different nominal models. As in previous sections, these models
are fit to a squared exponential covariance model
\cref{eq:cov-se-nugget} using the maximum likelihood method. For each
nominal model, we build the training set
\begin{equation}
  \label{eq:q-plus}
  \mathcal{Q}_i = \{ \Qb^+ = \lambda^+ \otimes \nu \}\, ,
  \quad i=1, 2, 3\, ,
\end{equation}
a collection of alternative models based on $\lambda^+$ that are fit
to enlargements of the nominal model data (i.e.\ to $80$ percent of
the full data set) where points are added by sampling those excluded
from the nominal model set uniformly. The corresponding frequency
distribution of the relative entropy for each $\Pb_i$ with respect to
$|\mathcal{Q}_i| = \num{e5}$ alternative models is displayed in
\cref{fig:experiment2_RE_hist}. The distributions, which exhibit
multi-modality, demonstrate the non-trivial dependence of the relative
entropy on the nominal model under consideration. These frequency
distributions play a similar role to the relative entropy landscape in
\cref{fig:mod1_relandscape} since in this case there is no natural
ordering among the alternative models (cf.\
\cref{fig:mod1_relandscape} where the small epsilon perturbation
provides a basis for ordering the models). The tightness of the hybrid
information divergence suggests that the modeling error
\cref{eq:weak-error-observables} may cluster according to the peaks in
the relative entropy distribution. Further, the information budget
established by the maximum observed relative entropy can be used to
bound all the alternative models in $\mathcal{Q}_i$ according to
\cref{cor:info-budget}.

From each $\mathcal{Q}_i$, we select four alternative models
$\Qb_{\max} = \lambda_{\max} \otimes \nu$,
$\Qb_{\min} = \lambda_{\min} \otimes \nu$,
$\Qb_{\mathrm{mean}} = \lambda_{\mathrm{mean}} \otimes \nu$, and
$\Qb_{\mathrm{med}} = \lambda_{\mathrm{med}} \otimes \nu$ that
correspond to the maximum, minimum, mean, and median relative entropy
with respect to $\Pb_i$, respectively, see also
\cref{fig:experiment2_RE_hist}. In \cref{fig:experiment2_bounds}, we
observe that once again that the bounds $\Xi_{\pm}$ yield robust
predictions for the modeling error between the nominal and each of
the alternative models. As expected, we observe that the weak error
corresponding to $\Qb_{\max}$ appear to be worst-case scenarios and
that this error is reliably contained in the envelope defined by
$\Xi_{\pm}$. In the present setting, these goal-oriented bounds
$\Xi_{\pm}$ represent data-informed quantities that encapsulate
worst-case scenarios for the errors in misspecifying the
geostatistical model due to epistemic uncertainty.

\section{Efficient computation of hybrid information divergences}%
\label{sec:efficient-sampling-risk}%

The variance of the hybrid information divergence $\Xi$ depends on the
variance of the risk-sensitive hybrid performance measure
$\Lambda_\gamma(c, H^g)$ defined in \cref{eq:risk-sensitive-pm}. As
$\Lambda_\gamma$ has the form of a cumulant generating functional of
the marginal performance measure $H^g$ in \cref{eq:hybrid-pm}, the
variance of $\Lambda_\gamma$ behaves like the variance of an
exponential function of the random variable $H^g$. If $\Xi$ exhibits
large variance, as observed for the upper bound $\Xi_{+}$ for the goal
function $g_3$ in
\cref{fig:mod1_uq_bounds_ell,fig:experiment1_g3,fig:experiment2_bounds},
then attempting to reduce the overall variance by running additional
simulations for $H^g$ may be infeasible if the solver for the forward
model is computationally expensive. Presently, we outline a different
strategy that involves estimating $\Lambda_\gamma$ by concentration
inequalities from Large Deviations Theory
(\cite{BoucheronEtAl:2013ci}). This approach was recently introduced
and applied to model problems in \cite{GourgouliasEtAl:2017aa} and we
extend these ideas to complex systems involving random PDE where the
stochastic fields are infinite dimensional and inferred from
real-world data. The concentration inequality estimates that we
explore rely on statistics of $H^g$, such as mean and variance, and
provide a bound on $\Lambda_\gamma$, that can be used as a surrogate
in $\Xi$, accounting for any available data. Thus, not only are these
concentration inequality estimates computationally non-intrusive but
they also rely on quantities that one is already likely to compute in
the normal course of a simulation.

\subsection{Variance of the standard estimator for $\Xi$}%
\label{sec:vari-estimator-xi+}%

The variance of the standard MC estimator for $\Xi_{+}$ is given by
\begin{equation*}
  \var_\gamma\left[\Xi_{+}(\lambda\mid\gamma; H^g)\right] 
  = \frac{\var_\gamma [ e^{c^*H^g}]}{(c^*)^2 M (\E_\gamma [e^{c^*H^g}])^2},
\end{equation*}
a quantity that depends exponentially on both $c^*$ and $H^g$. We
recall that the optimal $c^*$ is linked to the information budget
$\rho = \RE(\lambda \mid \gamma)$ by (\ref{eq:optimal-cstar-rho}). For
alternative models that are close in relative entropy to the nominal
model, such as small parametric perturbations, then
\cref{eq:optimal-cstar-rho} provides a good approximation of the
optimal $c^*$ up to first order in $\rho$. However, for alternative
models that are a large relative entropy distance from the nominal
model, we see from \cref{eq:optimal-cstar-rho} that the optimal $c^*$
grows at least linearly in $\rho$.

Attempting to sample an estimator with large variance poses a
difficulty for the present application of interest as sampling
involves calls to a random PDE solver. In such settings, it is
therefore of interest to find an alternative strategy to sampling
$\Lambda_\gamma$. As suggested by the right-hand side of
\cref{eq:risk-sensitive-pm}, $\Lambda_\gamma$ may have a known
description as a cumulant generating functional for particular $H^g$.
In other instances, $\gamma$ might have a form amenable to the
numerical integration of $\E_{\gamma}[e^{c(H^g-\E_\gamma[H^g])}]$, for
example via thermodynamic integration techniques
(\cite{LelievreRoussetStoltz:2010fe}).

In the remainder of this section, we indicate an alternative approach,
recently introduced in \cite{GourgouliasEtAl:2017aa}, that relies on
concentration inequalities from large deviations theory to bound
$\Lambda_\gamma$. The concentration inequalities, at least in their
simplest form, require bounded observables $H^g$ but rely on
quantities that we are already likely to be sampling in our simulation
such as the expected value and the variance. Although in the form of
concentration inequalities discussed below the observable must be
bounded, we show that such bounds produce fairly reliable results even
when the observable is merely finite and an artificial bound is
imposed (see \cref{rmk:finite-observables}). We refer to
\cite{GourgouliasEtAl:2017aa} for a complete discussion on UQ methods
based on concentration inequalities for both bounded and unbounded
observables in several model problems.

\subsection{Concentration inequalities for risk-sensitive performance
  measures}%
\label{sec:conc-ineq}%

We recall the following bound on the moment generating function of a
random variable in terms of its first two moments (see e.g.\
\cite{DemboZeitouni:2010ld}).

\begin{lemma}[Bennett]
  \label{lem:bennett}
  Supposed $X \leq b$ is a real-valued random variable with
  $m = \E[X]$ and $\E[(X-m)^2] \leq s^2$ for some $s > 0$. Then, for
  any $c \geq 0$,
  \begin{equation*}
    \label{eq:bennett}
    \E[e^{c X}] \leq e^{c m} 
    \left( \frac{(b-m)^2}{(b-m)^2 +s^2}e^{- \frac{cs^2}{b-m}} 
      + \frac{s^2}{(b-m)^2+s^2}e^{c(b-m)} \right).
  \end{equation*}
\end{lemma}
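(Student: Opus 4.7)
The plan is first to reduce to a centered variable. Setting $Y = X - m$ and $B = b - m$, we have $Y \leq B$, $\E Y = 0$, $\E Y^2 \leq s^2$, and factoring $e^{cm}$ out of both sides reduces the claim to
\begin{equation*}
  \E[e^{cY}] \leq \frac{B^2}{B^2+s^2} e^{-cs^2/B} + \frac{s^2}{B^2+s^2} e^{cB}.
\end{equation*}
I would then recognize the right-hand side as $\int e^{cy}\, d\mu^\star(y)$ for the two-point probability measure $\mu^\star = p_1 \delta_{y_1} + p_2 \delta_{y_2}$ with $y_1 = -s^2/B$, $y_2 = B$, $p_1 = B^2/(B^2+s^2)$, and $p_2 = s^2/(B^2+s^2)$. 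A direct computation confirms $\int y\, d\mu^\star = 0$ and $\int y^2\, d\mu^\star = s^2$, so $\mu^\star$ is admissible under the same moment constraints as $Y$. The lemma then amounts to asserting that $\mu^\star$ is an extremizer for the corresponding moment problem.

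The strategy for proving this is to exhibit a quadratic majorant: a polynomial $q(y) = \alpha + \beta y + \gamma y^2$ with $\gamma \geq 0$ satisfying $q(y) \geq e^{cy}$ for all $y \leq B$, with equality at $y_1$ and $y_2$. Granted such a $q$, taking expectations yields
\begin{equation*}
  \E[e^{cY}] \leq \alpha + \beta\, \E[Y] + \gamma\, \E[Y^2] \leq \alpha + \gamma s^2 = p_1 q(y_1) + p_2 q(y_2) = p_1 e^{cy_1} + p_2 e^{cy_2},
\end{equation*}
where the penultimate identity uses the moment equations for $\mu^\star$ together with $p_1 + p_2 = 1$, and the final identity uses the interpolation conditions on $q$. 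This chain collapses to the desired bound.

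The main obstacle is the construction of $q$ and verification of the majorization on $(-\infty, B]$. I would take $q$ to be determined by the three conditions $q(y_1) = e^{cy_1}$, $q'(y_1) = c e^{cy_1}$, and $q(y_2) = e^{cy_2}$, i.e.\ tangency to the exponential at $y_1$ and interpolation at $y_2$. Solving the resulting linear system yields
\begin{equation*}
  \gamma = \frac{e^{cy_2} - e^{cy_1}\bigl(1 + c(y_2 - y_1)\bigr)}{(y_2 - y_1)^2},
\end{equation*}
which is nonnegative by the elementary inequality $e^t \geq 1 + t$, with strict positivity when $c > 0$. For the majorization, define $r(y) = q(y) - e^{cy}$; since $r''(y) = 2\gamma - c^2 e^{cy}$ is strictly decreasing, $r'$ has at most two zeros and $r$ at most three counted with multiplicity. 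The construction places a double zero at $y_1$ and a simple zero at $y_2$, exhausting these. Combined with $r(y) \to +\infty$ as $y \to -\infty$ (for $c > 0$; the case $c = 0$ is trivial), sign analysis then forces $r \geq 0$ on $(-\infty, y_2]$, completing the proof.
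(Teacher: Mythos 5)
Your proof is correct. Note that the paper itself gives no proof of this lemma: it is quoted as a known result with a pointer to Dembo--Zeitouni, so there is no in-paper argument to compare against. Your argument is the classical one for Bennett's moment-generating-function bound: identify the right-hand side as $\int e^{cy}\,\dd\mu^\star(y)$ for the extremal two-point law matching the moment constraints, then majorize $e^{cy}$ on $(-\infty,B]$ by the quadratic with a double contact at $y_1=-s^2/B$ and a simple contact at $y_2=B$. All the steps check out: the moment identities for $\mu^\star$, the sign $\gamma\ge 0$ via $e^t\ge 1+t$, the zero count for $r=q-e^{c\cdot}$ (at most three with multiplicity since $r''$ is strictly decreasing, all accounted for by the prescribed contacts, so the double zero at $y_1$ is sign-preserving and $r>0$ to its left forces $r\ge 0$ up to $y_2$), and the chain $\E[e^{cY}]\le\alpha+\gamma\E[Y^2]\le\alpha+\gamma s^2=p_1e^{cy_1}+p_2e^{cy_2}$, which uses both $\gamma\ge0$ and $\E[Y^2]\le s^2$ in the right order. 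The only unaddressed point is the degenerate case $b=m$ (forcing $X=m$ a.s.), where $y_1$ is undefined; but the lemma's own right-hand side is already ill-posed there, so implicitly $b>m$ and your argument covers everything the statement covers.
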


Thus we formulate a bound for $\Lambda_\gamma (c,H^g)$ where the
estimator of this quantity does not involve sampling an exponentially
large quantity, i.e.\ the moment generating functional.

\begin{theorem}[Concentration]
  \label{thm:concentration}
  For a bounded observable $H^g \leq \bar{b}$ and $c \geq 0$,
  \begin{equation*}
    \Lambda_\gamma(c, H^g) \leq 
    \frac{1}{c} \log \left( \frac{(\bar{b}-\widehat{H^g})^2}
      {(\bar{b}-\widehat{H^g})^2+s_g^2} 
      e^{-cs_g^2/(\bar{b}-\widehat{H^g})} 
      + \frac{s_g^2}{(\bar{b}-\widehat{H^g})^2+s_g^2}
      e^{c(\bar{b}-\widehat{H^g})} \right),
  \end{equation*}
  where $\widehat{H^g} = \E_\gamma [H^g]$ and
  $s_g^2 = \var_\gamma[H^g]$.
\end{theorem}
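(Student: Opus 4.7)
The plan is to apply Bennett's lemma (\cref{lem:bennett}) directly to the centered random variable $X := H^g - \widehat{H^g}$. Observe that by the definition \cref{eq:risk-sensitive-pm},
\begin{equation*}
\Lambda_\gamma(c, H^g) = \frac{1}{c} \log \E_\gamma[e^{cX}],
\end{equation*}
so the task reduces to producing an upper bound on $\E_\gamma[e^{cX}]$ whose coefficients match the right-hand side of the claim.

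I would then verify that $X$ satisfies the hypotheses of \cref{lem:bennett} with the parameter choices $b = \bar{b} - \widehat{H^g}$, $m = 0$, and $s^2 = s_g^2$. Indeed, (i) $X \le \bar{b} - \widehat{H^g}$ follows from $H^g \le \bar{b}$; (ii) $\E_\gamma[X] = 0$ by construction, so $m = 0$ is admissible; and (iii) $\E_\gamma[(X - m)^2] = \var_\gamma[H^g] = s_g^2$. Note that $\bar{b} - \widehat{H^g} \ge 0$ by taking expectations in $H^g \le \bar{b}$, so the denominator $(\bar{b} - \widehat{H^g})^2 + s_g^2$ is strictly positive in the nondegenerate case $s_g^2 > 0$. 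Substituting these choices into Bennett then gives
\begin{equation*}
\E_\gamma[e^{cX}] \le \frac{(\bar{b}-\widehat{H^g})^2}{(\bar{b}-\widehat{H^g})^2+s_g^2}\, e^{-cs_g^2/(\bar{b}-\widehat{H^g})} + \frac{s_g^2}{(\bar{b}-\widehat{H^g})^2+s_g^2}\, e^{c(\bar{b}-\widehat{H^g})}
\end{equation*}
for every $c \ge 0$.

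Since the logarithm is monotone and $c > 0$, dividing both sides by $c$ after taking logs recovers $\Lambda_\gamma(c, H^g)$ on the left and the stated right-hand side on the right (the case $c = 0$ is handled by passing to the limit $c \to 0^+$, where both sides vanish). There is essentially no obstacle here: the theorem is a direct corollary of Bennett applied to the centered observable, with the only minor bookkeeping being to exclude or trivially dispatch the degenerate case $\bar{b} = \widehat{H^g}$ (which forces $H^g$ to be $\gamma$-a.s.\ constant and $s_g^2 = 0$).
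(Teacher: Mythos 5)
Your proposal is correct and follows exactly the paper's argument: apply \cref{lem:bennett} to the centered variable $X = H^g - \widehat{H^g}$ with $b = \bar{b}-\widehat{H^g}$, $m=0$, and $s^2 = s_g^2$, then take logarithms and divide by $c$. The additional bookkeeping you supply (monotonicity of the logarithm, positivity of $\bar{b}-\widehat{H^g}$, and the degenerate cases $c=0$ and $s_g^2=0$) is consistent with, and slightly more careful than, the paper's one-line proof.
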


\begin{proof}
  This follows immediately from \cref{lem:bennett} by considering the
  centered $X = H^g - \widehat{H^g}$ with
  $b = \bar{b} - \widehat{H^g}$, $m = \E[H^g - \widehat{H^g}] = 0$,
  and $s_g^2 = \E[X^2] = \var_\gamma[H^g]$.
\end{proof}

We note from \cref{thm:uq-interval-failure}, that for failure
probabilities the risk-sensitive performance measure has the form
\cref{eq:Lambda-failure-prob} thus the bound appearing in
\cref{thm:concentration} holds with equality. An immediate extension
to \cref{lem:bennett} bounds the moment generating function in terms
of its mean and support and can be used when $H^g$ has both an upper
and lower bound.

\begin{lemma}[Bennett-$(a,b)$]
  \label{lem:bennett-ab}
  Suppose $X \in [a,b]$, for fixed $a<b$, is a real-valued random
  variable with $m = \E[X]$. Then for any $c \in \rset$,
  \begin{equation*}
    \E[e^{c X}] \leq \frac{m-a}{b-a} e^{cb} + \frac{b-m}{b-a} e^{ca}.
  \end{equation*}
\end{lemma}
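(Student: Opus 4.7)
The plan is to exploit the convexity of the exponential function together with the fact that every point of $[a,b]$ has a canonical representation as a convex combination of the endpoints. Concretely, for each $x \in [a,b]$ one has the identity
\begin{equation*}
  x = \frac{x-a}{b-a}\, b + \frac{b-x}{b-a}\, a,
\end{equation*}
with nonnegative weights summing to one. Applying convexity of $t \mapsto e^{ct}$ (valid for every $c \in \rset$, with no sign restriction, which is why the statement can drop the $c \geq 0$ hypothesis of \cref{lem:bennett}) gives the pathwise bound
\begin{equation*}
  e^{cX} \leq \frac{X-a}{b-a}\, e^{cb} + \frac{b-X}{b-a}\, e^{ca},
\end{equation*}
which holds almost surely since $X \in [a,b]$.

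Taking expectations of both sides and using linearity together with $\E[X] = m$ yields
\begin{equation*}
  \E[e^{cX}] \leq \frac{m-a}{b-a}\, e^{cb} + \frac{b-m}{b-a}\, e^{ca},
\end{equation*}
which is precisely the claimed inequality. The argument requires no second-moment information, in contrast to \cref{lem:bennett}, because the two-point support structure is already optimally captured by the convex-combination representation.

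The only subtle point, and the main thing to verify carefully, is that the constants on the right-hand side match the stated form and that the argument is valid for all $c \in \rset$ (not just $c \geq 0$); both follow because convexity of the exponential does not depend on the sign of $c$, and because both weights $(X-a)/(b-a)$ and $(b-X)/(b-a)$ are nonnegative whenever $X \in [a,b]$. There is no real obstacle here; the proof is essentially a one-line application of Jensen's inequality applied pointwise and then integrated, and the only thing to keep track of is the bookkeeping of which endpoint receives which weight in the final expression.
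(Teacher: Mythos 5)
Your proof is correct and is the standard chord-convexity argument: write $X$ as a convex combination of the endpoints, apply convexity of $t \mapsto e^{ct}$ pointwise (valid for all $c \in \rset$), and take expectations. The paper states \cref{lem:bennett-ab} without proof, calling it an immediate extension of \cref{lem:bennett}, and your argument is exactly the intended one, including the correct observation that no second-moment information or sign restriction on $c$ is needed.
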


We end by demonstrating these alternative bounds for the experiment in
\cref{sec:screening-and-sa}.

\subsection{Implementation for a parametric model}%
\label{sec:conc-ineq-for-param-model}%

In the spirit of \cref{eq:estimate-xi}, we let
\begin{equation}
  \label{eq:conc-estimate}
  \zeta (c, H^g) := \bar{\Lambda}_\gamma(c, H^g) 
  + \frac{1}{c}\bar{\RE}(\gamma' \mid \gamma),
\end{equation}
and obtain
\begin{equation}
  \label{eq:risk-sensitive-bound-bennett}
  B_{+}(H^g) \approx \zeta (c^*, H^g) \qquad \text{and} 
  \qquad C_{+}(H^g) \approx \zeta (c^*, H^g),
\end{equation}
for an optimal $c^*$ where $\bar{\Lambda}_\gamma(c ,H^g)$ in
\cref{eq:conc-estimate} is approximated using \cref{thm:concentration}
and \cref{lem:bennett-ab}, respectively. These surrogates for the
cumulant generating functional account for any available data through
suitable statistical quantities of the marginal performance measure
$H^g$ in \cref{eq:hybrid-pm} that is related to the propagation of
model-form uncertainty from the geostatistical model to the forward
model in \cref{fig:layers-subsurface-flow-model}. Corresponding lower
bounds, $B_{-}(H^g)$ and $C_{-}(H^g)$, are derived in a similar
manner. In \cref{fig:mod1_g3_xi_and_bennett}, we demonstrate all of
the bounds for the goal functional $g_3$
\cref{eq:data-goal-functionals-g3}, noting that the bounds $B_{\pm}$
and $C_{\pm}$ have much smaller variance than the estimator for
$\Xi_{\pm}$ and in each case form an envelope around the sensitivity
for the QoI that remains tight and robust. In
\cref{fig:mod1_g3_xi_and_bennett}, the shaded regions denoting
confidence intervals for the concentration inequalities $B_\pm$ and
$C_\pm$ are negligible compared to the shaded region for the estimator
of $\Xi_\pm$. The smaller variance of the concentration inequalities
comes at the cost of some tightness in the estimates for
$\mathcal{E}$.

\begin{figure}[]
  \centering
  \subfloat[]{\label{fig:fig:mod1_g3_xi_and_bennett-ell}
    \includegraphics[scale=1]{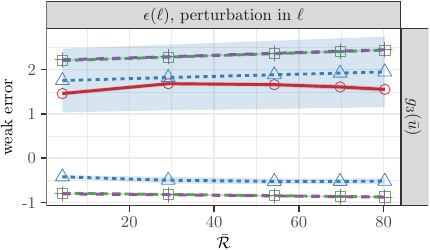}} \hfill
  \subfloat[]{\label{fig:fig:mod1_g3_xi_and_bennett-tau}
    \includegraphics[scale=1.025]{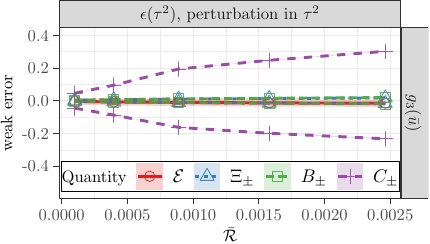}}
  \caption{The bounds $\Xi_{\pm}$, $B_\pm$, and $C_\pm$ are compared
    above for predictions of the weak error $\mathcal{E}$ between
    small parametric perturbations of $\ell$ (a) and $\tau^2$ (b) for
    the parametric geostatistical model with mean $\mu$ and covariance
    \cref{eq:cov-se-nugget} (cf.\
    \cref{fig:mod1_uq_bounds_ell,fig:mod1_uq_bounds_tau}). The bounds
    $B_\pm$ and $C_\pm$ in \cref{eq:risk-sensitive-bound-bennett}
    based on the Bennett and Bennett-$(a,b)$ concentration
    inequalities in \cref{lem:bennett,lem:bennett-ab} provide a
    computationally efficient alternative to sampling the cumulant
    generating functional for the marginal performance measure
    directly when the goal functional has high variance as is the case
    for \cref{eq:data-goal-functionals-g3}. Notice in (a) that the
    shaded regions denoting confidence intervals for the concentration
    inequalities $B_\pm$ and $C_\pm$ are negligible compared to shaded
    region for the estimator of $\Xi_\pm$. The smaller variance of the
    concentration inequalities comes at the cost of some tightness in
    the estimates for $\mathcal{E}$; in this case
    $\Xi \leq B_\pm \leq C_\pm$. In particular, we observe in (b) that
    $C_\pm$ over predicts the weak error $\mathcal{E}$.}
  \label{fig:mod1_g3_xi_and_bennett}
\end{figure}

\begin{remark}[Unbounded QoIs]
  \label{rmk:finite-observables}
  Although the concentration inequalities as quoted here are indicated
  only for a bounded QoI, we note that there exist other formulations
  for unbounded QoIs such as for sub-Gaussian random variables
  (\cite{GourgouliasEtAl:2017aa}). In practice
  \cref{thm:concentration} is a useful computational tool for a finite
  QoI; we observe the tight bounds demonstrated in
  \cref{fig:mod1_g3_xi_and_bennett} are for
  $g_3(\bar{u}) = \min(\bar{u}(1), 3)$ where the cut-off was
  arbitrarily chosen using a training set of \num{e3} observations of
  $\bar{u}(1)$.
\end{remark}

\section{Conclusions}%

The present work develops UQ tools for a random PDE model of
steady-state subsurface flow in
\cref{fig:layers-subsurface-flow-model} with potential impacts in
hydrology, carbon sequestration, and petroleum engineering. These
tools are realized through the novel application of hybrid information
divergences that balance observable and data dependent quantities. The
hybrid nature of the divergences allows us to represent and
distinguish various sources of uncertainty entering into the model by
attaching different levels of confidence to parts of the model.
Ultimately, this allows us to address a key challenge concerning the
propagation of model-form or epistemic uncertainty from the
geostatistical model via the pathway in
\cref{fig:propagation-uncertainty}.

We derive tight and robust estimates for modeling errors or biases
from the hybrid information divergences and apply these to important
UQ tasks including parametric sensitivity analysis and model
misspecification arising from sparse data. In particular, we
demonstrate the use of these bounds for making data-informed
predictions such as quantifying the impact of incomplete data as in
\cref{sec:data-informed-bounds}. The robustness, when interpreted as
including worst-case scenarios within a given information budget
(i.e.\ within a family of acceptable alternative models, see
\cref{fig:bounds-across-family}), suggests that these bounds are an
appropriate deliverable in the context of the decision support
framework in \cref{fig:layers-subsurface-flow-model}. We emphasize
that the bounds derived here are also goal-oriented and non-intrusive
in nature, that is, can be used in conjunction with any algorithm or
solver for the random PDE problem in
\cref{fig:layers-subsurface-flow-model}. Finally, we also make
connections between the hybrid information divergences and certain
concentration inequalities from Large Deviations Theory that can be
leveraged for efficient computing and account for any available data
through suitable statistical quantities.

\appendix 

\section{Source code}

Source code and links to observational data are available at\\
\url{https://github.com/ejhall/robust-uq-divergences}.

\bibliographystyle{siamplain}%
\bibliography{uq_rpde_r2}%

\end{document}